\numberwithin{equation}{section}
\newtheorem{dfn}{Definition}[section]
\newtheorem{thm}[dfn]{Theorem}
\newtheorem{lma}[dfn]{Lemma}
\newtheorem{ppsn}[dfn]{Proposition}
\newtheorem{crl}[dfn]{Corollary}
\newtheorem{rmrk}[dfn]{Remark}
\newcommand{\cir}{\mathbb{T}}
\newcommand{\R}{\mathbb{R}}
\newcommand{\C}{\mathbb{C}}
\newcommand{\N}{\mathbb{N}}	
\newcommand{\Z}{\mathbb{Z}}
\newcommand{\hilh}{\mathcal{H}}
\newcommand{\bh}{\mathcal{B}(\hilh)}
\newcommand{\Sp}{\mathcal{S}}
\newcommand{\Tr}{\operatorname{Tr}}
\begin{document}
	
\title{Higher order $\Sp^{p}$-differentiability: The unitary case}

\author[A. Chattopadhyay] {Arup Chattopadhyay}
\address{Department of Mathematics, Indian Institute of Technology Guwahati, Guwahati, 781039, India}
\email{arupchatt@iitg.ac.in, 2003arupchattopadhyay@gmail.com}

\author[C. Coine]{Cl\'ement Coine}
\address{Normandie Univ, UNICAEN, CNRS, LMNO, 14000 Caen, France}	
\email{clement.coine@unicaen.fr}

\author[S. Giri]{Saikat Giri}
\address{Department of Mathematics, Indian Institute of Technology Guwahati, Guwahati, 781039, India}	\email{saikat.giri@iitg.ac.in, saikatgiri90@gmail.com}

\author[C. Pradhan]{Chandan Pradhan}
\address{Department of Mathematics, Indian Institute of Science, Bangalore, 560012, India}
\email{chandan.pradhan2108@gmail.com}	
	
\subjclass[2010]{47B49, 47B10, 46L52, 47A55}
	
\keywords{Multiple operator integrals, Differentiation of operator functions}

\begin{abstract}
Consider the set of unitary operators on a complex separable Hilbert space $\hilh$, denoted as $\mathcal{U}(\hilh)$. Consider $1<p<\infty$. We establish that a function $f$ defined on the unit circle $\cir$ is $n$ times continuously Fr\'echet $\Sp^p$-differentiable at every point in $\mathcal{U}(\hilh)$ if and only if  $f\in C^n(\cir)$. Take a function $U :\R\rightarrow\mathcal{U}(\hilh)$ such that the function $t\in\R\mapsto U(t)-U(0)$ takes values in $\Sp^{p}$ and is $n$ times continuously $\Sp^{p}$-differentiable on $\R$. Consequently, for $f\in C^n(\cir)$, we prove that $f$ is $n$ times continuously G\^ateaux $\mathcal{S}^p$-differentiable at $U(t)$. We provide explicit expressions for both types of derivatives of $f$ in terms of multiple operator integrals. In the domain of unitary operators, these results closely follow the $n$th order successes for self-adjoint operators achieved by the second author, Le Merdy, Skripka, and Sukochev. Furthermore, as for application, we derive a formula and $\Sp^{p}$-estimates for operator Taylor remainders for a broader class of functions.  Our results extend those of Peller, Potapov, Skripka, Sukochev and Tomskova.
\end{abstract}
\maketitle

%\tableofcontents		
	
\section{Introduction}\label{Sec1}
Beginning the journey into function theory, especially when delving into operator functions, the concept of differentiability takes on crucial significance. Early breakthroughs in comprehending the differentiability of operator functions were achieved in \cite{DaKr56}, subject to rigorous conditions imposed on both functions and operators. These initial insights underwent significant refinement and expansion in subsequent works, such as \cite{AzCaDoSu09, ArBaFr90, BiSo3, DePSu04, KiSh04, KiPoShSu14, Pe00, Pe06, PoSkSu16, St77}, responding to advancements in perturbation theory. Despite these advancements, certain challenges persisted. This paper addresses one of the open problems.

Throughout the article we consider $\hilh$ to be a complex separable Hilbert space and let $\bh$ and $\mathcal{U}(\hilh)$ denote respectively the space of bounded linear operators, and the space of unitary operators on $\hilh$. For $p\in[1,\infty)$, the $p$th Schatten-von Neumann class is 
\begin{align*}
\Sp^{p}(\hilh)=\left\{A\in\bh:\|A\|_{p}:=\Tr(|A|^{p})^{1/p}<\infty\right\},
\end{align*}
$\Sp^{p}_{sa}(\hilh)$ is the subset of self-adjoint elements of $\Sp^{p}(\hilh)$, and $\Tr$ is the canonical trace on the trace class ideal  $\Sp^{1}(\hilh)$. The unsuffixed norm $\|\cdot\|$ will always mean the operator norm.

For self-adjoint operators, Le Merdy and Skripka \cite{LeSk20} established the $n$th order Fr\'echet and G\^ateaux $\Sp^{p}$-differentiability ($1<p<\infty$) of the operator function $t\in\R\mapsto f(A+tB)-f(A)$ for an $n$ times continuously differentiable function $f$. Subsequently, in \cite{Co22}, the second author extended this class from $n$ times continuously differentiable to $n$ times differentiable functions concerning $n$th order G\^ateaux $\Sp^{p}$-differentiability. The analogous problem of studying the differentiability of an operator function for unitary operators has also been investigated by various experts in this field.

Let $U$ be a unitary operator, and $A$ be a bounded self-adjoint operator acting on $\hilh$. Define $$U(s)=e^{isA}U,~ s\in\R.$$ 
In \cite{PoSkSu16}, for any function $f$ belonging to the Besov class $B^n_{\infty 1}(\cir)$ on the unit circle $\cir$, where $n(\ge 2)$ is a natural number, Potapov, Skripka, and Sukochev gave the existence of the $n$th order G\^ateaux derivative of $f$ at $U$, $\frac{d^n}{ds^n}\big|_{s=t}f(U(s))$, in the operator norm. Moreover, they expressed the value as a linear combination of multiple operator integrals with respect to divided differences $f^{[1]}, f^{[2]},\ldots,f^{[n]}$ of the function $f$ (see \cite[Theorem 3.1]{PoSkSu16}), correcting the formula earlier obtained in \cite{Pe06}. The case $n=1$ was addressed in \cite{BiSo3,Pe85}. A recent proof in \cite[Theorem 3.6]{PoSkSuTo17} established that for a function $f\in C^{n}(\cir)$ and a finite-dimensional Hilbert space $\hilh=\ell^{2}_{d}$, the function is $n-1$ times G\^ateaux $\Sp^{p}$-differentiable at $U$.

\smallskip

\noindent{\bf New results, novelty, and methodology:}  In this article, we answer several questions on higher order differentiability of operator functions in $\|\cdot\|_{p}$, $1<p<\infty$. The exponent $p$ in this paper is reserved for a number in the interval $p\in(1,\infty)$.

We prove (see Theorem \ref{Frechetdiff}) that $f$ is $n$ times continuously Fr\'echet $\Sp^{p}$-differentiable at every unitary operator $U\in\mathcal{U}(\hilh)$ if and only if $f\in C^{n}(\cir)$.  Consequently, we prove (see Theorem \ref{MainTheorem1}) $f$ is $n$ times continuously G\^ateaux $\Sp^{p}$-differentiable under the assumption $f\in C^{n}(\cir)$ by extending the relative results of \cite{Pe06, PoSkSu16}, which specifically addressed the G\^ateaux differentiability of $f\in B^{n}_{\infty 1}(\cir)$. Let $U: \mathbb{R} \rightarrow \mathcal{U}(\hilh)$ be a function such that 
$$\tilde{U}:t\in\R\mapsto U(t)-U(0)\in\Sp^{p}(\hilh)$$
is an $n$ times $\mathcal{S}^p$-differentiable function on $\mathbb{R}$. Then, for $f\in C^n(\cir)$, we establish the $n$ times $\Sp^p$-differentiability of
$$\varphi:t\in\R\mapsto f(U(t))-f(U(0))\in\Sp^{p}(\hilh).$$ In addition, if $\tilde{U}^{(n)}$ is continuous then $\varphi^{(n)}$ is also continuous in $\Sp^{p}(\hilh)$ (see Theorem \ref{MainTheorem1}(i)). It is crucially noted in the literature (see, e.g., \cite{PoSkSu16, Sk17,PoSkSuTo17}) that the higher order G\^ateaux derivatives of an operator function play a fundamental role in examining the behavior of the higher order Taylor remainder of the operator function. 

\smallskip

\noindent Consider the $n$th order Taylor remainder
\begin{align}\label{taylorremunigen}
R_{n,f,U}(t):=f(U(t))-f(U(0))-\sum_{m=1}^{n-1} \dfrac{1}{m!}\varphi^{(m)}(0).
\end{align}
In particular, the following estimate of the above Taylor remainder \eqref{taylorremunigen} is already obtained in \cite{PoSkSuTo17} for $1<n<p<\infty$, $U(t)=e^{itA}U$ with $A\in \Sp^p_{sa}(\hilh)$,
\begin{align}\label{introremainderest}
\|R_{n,f,U}(1)\|_{\frac{p}{n}}\le \tilde{c}_{p,n}\sum_{m=1}^{n}\|f^{(m)}\|_{\infty}\|A\|_p^n
\end{align}
for every $f\in C^{n}(\cir)$, where $\tilde{c}_{p,n}$ is a positive constant depending on $p$ and $n$. 

It should be noted that for $f\in\mathcal{G}_{n}(\cir)\subsetneq C^n(\cir)$, an explicit formula for \eqref{taylorremunigen} with $U(t)=e^{itA}U$ is known in terms of the integration of certain multiple operator integrals (see \cite[Theorem 4.1]{PoSkSu16}). In \cite{Topre} (detailed proof can be found in \cite[Theorem 3.7]{PoSkSuTo17}), the author established a simple formula for the aforementioned Taylor remainder that corresponds to $f\in C^n(\cir)$ as a linear combination of multiple operator integrals (without involving the integration of multiple operator integrals), with respect to divided differences $f^{[1]}, f^{[2]},\ldots,f^{[n]}$, constrained to a finite-dimensional Hilbert space $\hilh=\ell^{2}_{d}$. Here, for a complex separable Hilbert space (not necessarily finite-dimensional), we establish the same formula for \eqref{taylorremunigen} for any $f\in C^n(\cir)$ (see, e.g., Theorem \ref{MainTheorem1}(ii), Corollary \ref{MainCorollary1}). Consequently, our results lead to a direct proof of \eqref{introremainderest} (see Corollary \ref{MainCorollary1}).

In conclusion, let us discuss the methodology employed in this paper. To achieve the results mentioned earlier, we will delve into establishing crucial properties of multiple operator integrals throughout the article. Our approach leans towards the construction of operator integrals outlined in \cite{CoLeSu21}, differing from those presented in \cite{Pe06, PoSkSu16, Sk17}. Specifically, for $1 < p < \infty$, $n\in\N$, and unitary operators $U_1,\ldots,U_n$ acting on the Hilbert space $\hilh$, we heavily use the $\Sp^{p}$-boundedness of operator integrals
\begin{align*}
\left\|\Gamma^{U_1,\ldots,U_{n+1}}(f^{[n]})\right\|_{\mathcal{B}_{n}(\Sp^p)}\leq c_{p,n} \left\|f^{(n)}\right\|_{\infty},
\end{align*}
for $f\in C^{n}(\cir)$, extending the bound of operator integrals considered in \cite[Theorem 3.3]{Sk17}. The importance of this construction compared to other constructions of multiple operator integrals \cite{AzCaDoSu09,BiSo1,DaKr56,Pe06,PoSkSu13,PoSkSu16} are discussed in the next Section \ref{Sec2}.

The article is organized as follows: Apart from the Introduction, it consists of two sections. In Section \ref{Sec2}, we will mainly concern ourselves with the properties of operator integrals. Firstly, in Subsection \ref{Subsec1}, we recall as well as establish the important properties of multiple operator integrals, and in Subsection \ref{Subsec2}, we establish the higher order perturbation formula for differences of multiple operator integrals. Section \ref{Sec3} addresses the differentiability of functions of unitary operators in $\Sp^{p}(\hilh)$ for $1<p<\infty$ by listing the statements of the main results in Subsection \ref{Subsec3}, then some auxiliary lemmas in Subsection \ref{Subsec4}, and finally providing the proof of the main results in Subsection \ref{Subsec5}.

\section{Multiple operator integration}\label{Sec2}

In this section, we recall the definition of  multiple operator integrals introduced in \cite{Pav69} and developed in \cite{CoLeSu21}, as well as derive its important properties that underline our main results.

\subsection{Multiple operator integrals associated to unitary operators}\label{Subsec1}\hfill\\

Progress in investigating the operator smoothness and differentiability relies on methodologies known as \enquote{multiple operator integration}, evolving since \cite{DaKr56}. There exist two primary approaches to multiple operator integration on Schatten classes, as detailed in \cite{SkToBook}. For our purposes, we adopt the definition of multiple operator integration developed in \cite{CoLeSu21}. While alternative constructions of multiple operator integrals exist \cite{AlNaPe2016, AlPe2016, AzCaDoSu09,BiSo1,DaKr56,Pe06,PoSkSu13,PoSkSu16}, they are tailored to smaller sets of symbols and thus do not align with the breadth of this paper's scope.

We fix the following notations and conventions to be used throughout the paper. Let $\mathcal{B}_{n}(X_1\times\cdots\times X_n,Y)$ denote the space of bounded $n$-linear operators mapping the Cartesian product $X_1\times\cdots\times X_n$ of Banach spaces $X_1,\ldots,X_n$ to a Banach space $Y$, that is, the space of $n$-linear mappings $T:X_1\times\cdots\times X_n\to Y$ such that
\begin{align*}
\|T\|_{\mathcal{B}_{n}(X_1\times\cdots\times X_n,Y)}:=\sup_{\|e_i\|\le 1,\,1\le i\le n}\|T(e_1,\ldots,e_n)\|<\infty.
\end{align*}
In the case when $X_1=\cdots=X_n=Y$, we will simply denote $\mathcal{B}_{n}(X_1\times\cdots\times X_n,Y)$ by $\mathcal{B}_{n}(Y)$. If $T\in\bh$, then for any $k\in\N$, we use the notation $(T)^{k}=\underbrace{T,\ldots,T}_{\text{$k$}}$. We denote the unit circle of $\C$ by $\cir$ and the class of continuous functions on $\cir$ by $C(\cir)$. For $f\in C(\cir)$, by its derivative at $z_{0}\in\cir$, we understand the limit
\begin{align}\label{ref14}
f'(z_0):=\lim_{\cir\ni z\to z_{0}}\dfrac{f(z)-f(z_{0})}{z-z_{0}},
\end{align}
provided it exists. For any $n\in\N$, denote by $C^n(\cir)$ the space of $n$ times continuously differentiable functions on $\cir$ in the sense of \eqref{ref14}. $\text{Lip}(\cir)$ $(\subset C(\cir))$ denotes the space of all Lipschitz functions $f:\cir\to\C$. Finally we let $D^n(\R,\Sp^{p}(\hilh))$ (respectively $C^n(\R,\Sp^{p}(\hilh))$) be the space of $n$ times differentiable (respectively continuously differentiable) functions $\varphi:\R\to\Sp^p(\hilh)$ with derivatives denoted by $\varphi^{(j)}:\R\to\Sp^p(\hilh),~j=1,\ldots,n.$

Let $U$ be a unitary operator on $\hilh$. Denote its spectrum by $\sigma(U)$ and its spectral measure by $E^U$, defined on Borel subsets of $\sigma(U)$. Let $\lambda_{U}$ be a scalar-valued spectral measure for $U$, which is a positive finite measure on the Borel subsets of  $\sigma(U)$ such that $E^U$ and $\lambda_{U}$ have the same sets of measure zero. Such a measure exists, due to the separability assumption on $\hilh$. The reader is directed to Section 15 in \cite{ConwayBook} and Section 2.1 in \cite{CoLeSu21} for more details. The Borel functional calculus for $U$ takes any bounded Borel function $f:\sigma(U)\to\C$ to the bounded operator
\begin{align*}
f(U):=\int_{\sigma(U)}^{}f(z)~dE^{U}(z).
\end{align*}
This operator only depends on the class of $f$ in $L^{\infty}(\lambda_{U})$.
Moreover, according to \cite[Theorem 15.10]{ConwayBook}, it induces a $w^{\ast}$-continuous $\ast$-representation
\begin{align*}
f\in L^{\infty}(\lambda_{U})\mapsto f(U)\in\bh.
\end{align*}
Let $n\in\N,~n\ge 2$, and let $U_1,\ldots,U_n$ be unitary operators on $\hilh$ with scalar-valued spectral measures $\lambda_{U_1},\ldots,\lambda_{U_n}$.

\begin{dfn}\label{DefMOI}
Let
\begin{align*}
\Gamma:L^{\infty}(\lambda_{U_1})\otimes\cdots\otimes L^{\infty}(\lambda_{U_n})\to\mathcal{B}_{n-1}(\Sp^2(\hilh))
\end{align*}
be the unique linear map such that for any $f_{i}\in L^{\infty}(\lambda_{U_i}),~1\le i\le n$, and for any $K_1,\ldots,K_{n-1}\in\Sp^2(\hilh)$,
\begin{align}
&\left[\Gamma(f_1\otimes\cdots\otimes f_n)\right](K_1,\ldots,K_{{n-1}})=f_1(U_1)K_1f_2(U_2)K_2\cdots f_{n-1}(U_{n-1})K_{n-1}f_n(U_n).
\end{align}
According to {\normalfont\cite[Proposition 3.4]{CoLeSu21}}, $\Gamma$ extends to a unique $w^{\ast}$-continuous and contractive map
\begin{align*}
\Gamma^{U_1,\ldots,U_n}:L^{\infty}\left(\prod_{i=1}^{n}\lambda_{U_i}\right)\to\mathcal{B}_{n-1}(\Sp^2(\hilh)).
\end{align*}
For $\varphi\in L^{\infty}\left(\prod_{i=1}^{n}\lambda_{U_i}\right)$, the transformation $\Gamma^{U_1,\ldots,U_n}(\varphi)$ is called multiple operator integral associated to $U_1
,\ldots,U_n$ and $\varphi$.\\
Let $\varphi:\cir^{n}\to\C$ be a bounded Borel function, we let $\widetilde{\varphi}$ be the class of its restriction $\varphi\vert_{\sigma(U_1)\times\cdots\times\sigma(U_n)}$ in $L^{\infty}\left(\prod_{i=1}^{n}\lambda_{U_i}\right)$. Then the $(n-1)$-linear map $\Gamma^{U_1,\ldots,U_n}(\widetilde{\varphi})$ will be simply denoted by 
\begin{align*}
\Gamma^{U_1,\ldots,U_n}(\varphi):\Sp^{2}(\hilh)\times\cdots\times\Sp^{2}(\hilh)\to\Sp^{2}(\hilh)
\end{align*}
in the sequel.
\end{dfn}

The $w^{\ast}$-continuity of $\Gamma^{U_1,\ldots,U_n}$ means that if a net $(\varphi_{i})_{i\in I}$ in $L^{\infty}\left(\prod_{i=1}^{n}\lambda_{U_i}\right)$ converges to $\varphi\in L^{\infty}\left(\prod_{i=1}^{n}\lambda_{U_i}\right)$ in the $w^{\ast}$-topology, then for any $K_1
,\ldots,K_{n-1}\in\Sp^{2}(\hilh)$, the net
\begin{align*}
\left(\left[\Gamma^{U_1,\ldots,U_n}(\varphi_i)\right](K_1,\ldots,K_{n-1})\right)_{i\in I}
\end{align*}
converges to $\left[\Gamma^{U_1,\ldots,U_n}(\varphi)\right](K_1,\ldots,K_{n-1})$ weakly in $\Sp^{2}(\hilh)$.

Let $p_1,\ldots,p_{n-1},p\in[1,\infty)$ and $\varphi\in L^{\infty}\left(\prod_{i=1}^{n}\lambda_{U_i}\right)$. We will write
\begin{align}
\Gamma^{U_1,\ldots,U_n}(\varphi)\in\mathcal{B}_{n-1}\left(\Sp^{p_1}(\hilh)\times\cdots\times\Sp^{p_{n-1}}(\hilh),\Sp^{p}(\hilh)\right)
\end{align}
if the multiple operator integral $\Gamma^{U_1,\ldots,U_n}(\varphi)$ defines a bounded $(n-1)$-linear mapping 
\begin{align*}
\Gamma^{U_1,\ldots,U_n}(\varphi):\left(\Sp^2(\hilh)\cap\Sp^{p_1}(\hilh)\right)\times\cdots\times\left(\Sp^2(\hilh)\cap\Sp^{p_{n-1}}(\hilh)\right)\to\Sp^{p}(\hilh),
\end{align*}
where $\Sp^2(\hilh)\cap\Sp^{p_i}(\hilh)$ is equipped with the $\|\cdot\|_{p_i}$-norm. By the density of $\Sp^2(\hilh)\cap\Sp^{p_i}(\hilh)$ into $\Sp^{p_i}(\hilh)$, this mapping has an (necessarily unique) extension 
\begin{align}
\label{MOI}\Gamma^{U_1,\ldots,U_n}(\varphi):\Sp^{p_1}(\hilh)\times\cdots\times\Sp^{p_{n-1}}(\hilh)\to\Sp^{p}(\hilh).
\end{align}
In the case when $p_1=\cdots=p_{n-1}=p$, we will simply write $\Gamma^{U_1,\ldots,U_n}(\varphi)\in\mathcal{B}_{n-1}(\Sp^{p}(\hilh))$.

The crucial point in the construction leading to Definition \ref{DefMOI} is the $w^{\ast}$-continuity of $\Gamma^{U_1,\ldots,U_n}$, which allows to reduce various computations to elementary tensor product manipulations, for which certain equations are straightforward to establish. See \cite{CoLeSkSu17} and \cite{Co22} for illustrations.

The following result is instrumental in demonstrating the $\Sp^p$-boundedness of certain multiple operator integrals. It can be proven by following similar lines of argument as presented in the proof of \cite[Lemma 2.3]{Co22}.

\begin{lma}\label{MOIProperty1}
Let $p_{1},\ldots,p_{n-1},p\in(1,\infty)$, and $n\ge 2$ be an integer. Let $U_{1},\ldots,U_{n}$ be unitary operators on $\hilh$ and $(\varphi_{k})_{k\ge 1}, \varphi\in L^{\infty}(\lambda_{U_1}\times\cdots\times\lambda_{U_n})$. Assume that $(\varphi_{k})_{k}$ is $w^{\ast}$-convergent to $\varphi$ and that $(\Gamma^{U_1,\ldots,U_n}(\varphi_{k}))_{k\ge 1}\subset\mathcal{B}_{n-1}(\Sp^{p_{1}}(\hilh)\times\cdots\times\Sp^{p_{n-1}}(\hilh),\,\Sp^{p}(\hilh))$ is bounded. Then 
\begin{align*}
\Gamma^{U_1,\ldots,U_n}(\varphi)\in\mathcal{B}_{n-1}(\Sp^{p_{1}}(\hilh)\times\cdots\times\Sp^{p_{n-1}}(\hilh),\,\Sp^{p}(\hilh))
\end{align*} 
with 
\begin{align*}
\left\|\Gamma^{U_1, \ldots, U_n}(\varphi)\right\|_{\mathcal{B}_{n-1}(\Sp^{p_{1}}\times\cdots\times\Sp^{p_{n-1}},\, \Sp^{p})}\le\liminf_{k}\left\|\Gamma^{U_1, \ldots, U_n}(\varphi_{k})\right\|_{\mathcal{B}_{n-1}(\Sp^{p_{1}}\times\cdots\times\Sp^{p_{n-1}},\,\Sp^{p})}
\end{align*}
and for any $K_{i}\in\Sp^{p_i}(\hilh),~1\le i\le n-1,$
\begin{align*}
\left[\Gamma^{U_1, \ldots, U_n}(\varphi_{k})\right](K_1,\ldots,K_{n-1}) \xrightarrow[k\to\infty]{}\left[\Gamma^{U_1, \ldots, U_n}(\varphi)\right](K_1,\ldots,K_{n-1})
\end{align*}
weakly in $\Sp^{p}(\hilh)$.
\end{lma}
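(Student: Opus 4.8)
The plan is to deduce every assertion from the $w^{\ast}$-continuity of $\Gamma^{U_1,\ldots,U_n}$ recorded just after Definition~\ref{DefMOI}, and then to upgrade it from $\Sp^2$ to $\Sp^p$ using the reflexivity of $\Sp^{p}(\hilh)$ for $1<p<\infty$. Write $T_k:=\Gamma^{U_1,\ldots,U_n}(\varphi_k)$ and $T:=\Gamma^{U_1,\ldots,U_n}(\varphi)$, and set $C:=\sup_k\|T_k\|_{\mathcal{B}_{n-1}(\Sp^{p_1}\times\cdots\times\Sp^{p_{n-1}},\,\Sp^p)}<\infty$. Since $(\varphi_k)_k$ is a $w^{\ast}$-convergent sequence, hence a $w^{\ast}$-convergent net, the $w^{\ast}$-continuity of $\Gamma^{U_1,\ldots,U_n}$ gives, for all $K_1,\ldots,K_{n-1}\in\Sp^2(\hilh)$,
\begin{align*}
T_k(K_1,\ldots,K_{n-1})\xrightarrow[k\to\infty]{}T(K_1,\ldots,K_{n-1})\quad\text{weakly in }\Sp^2(\hilh).
\end{align*}
This is the only input taken from the construction; the remainder is soft functional analysis.

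Next I would identify the weak-$\Sp^p$ limit. Fix $K_i\in\Sp^2(\hilh)\cap\Sp^{p_i}(\hilh)$ and put $x_k:=T_k(K_1,\ldots,K_{n-1})$, $x:=T(K_1,\ldots,K_{n-1})$. The uniform bound gives $\|x_k\|_p\le C\prod_i\|K_i\|_{p_i}$, so $(x_k)_k$ is bounded in the reflexive space $\Sp^p(\hilh)$ and is therefore relatively weakly compact. The key point is that its only possible weak-$\Sp^p$ cluster point is $x$: if a subsequence $(x_{k_j})_j$ converges weakly in $\Sp^p(\hilh)$ to some $y\in\Sp^p(\hilh)$, then testing against any rank-one operator $F$ (which lies in every Schatten class, in particular in $\Sp^2(\hilh)\cap\Sp^{p'}(\hilh)$ with $\frac1p+\frac1{p'}=1$) and comparing with the weak-$\Sp^2$ convergence above yields $\Tr\big((x-y)F\big)=0$. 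Letting $F$ range over rank-one operators forces $x=y$, and in particular $x\in\Sp^p(\hilh)$. As every weakly convergent subsequence of the bounded sequence $(x_k)_k$ has the same limit $x$, the whole sequence converges weakly in $\Sp^p(\hilh)$ to $x$, and weak lower semicontinuity of $\|\cdot\|_p$ gives $\|x\|_p\le\liminf_k\|x_k\|_p$.

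Combining this with $\|x_k\|_p\le\|T_k\|\prod_i\|K_i\|_{p_i}$ shows that, for all $K_i\in\Sp^2(\hilh)\cap\Sp^{p_i}(\hilh)$,
\begin{align*}
\|T(K_1,\ldots,K_{n-1})\|_p\le\Big(\liminf_k\|T_k\|_{\mathcal{B}_{n-1}(\Sp^{p_1}\times\cdots\times\Sp^{p_{n-1}},\,\Sp^p)}\Big)\prod_{i=1}^{n-1}\|K_i\|_{p_i}.
\end{align*}
By density of $\Sp^2(\hilh)\cap\Sp^{p_i}(\hilh)$ in $\Sp^{p_i}(\hilh)$, this is precisely the statement that $T=\Gamma^{U_1,\ldots,U_n}(\varphi)$ belongs to $\mathcal{B}_{n-1}(\Sp^{p_1}\times\cdots\times\Sp^{p_{n-1}},\Sp^p)$ with the claimed norm bound. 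To extend the weak convergence to arbitrary $K_i\in\Sp^{p_i}(\hilh)$, I would approximate each $K_i$ in $\|\cdot\|_{p_i}$ by $K_i^{(m)}\in\Sp^2(\hilh)\cap\Sp^{p_i}(\hilh)$ and run a three-$\varepsilon$ argument: using multilinearity to telescope, the differences $T_k(\vec K)-T_k(\vec K^{(m)})$ and $T(\vec K)-T(\vec K^{(m)})$ are controlled in $\Sp^p$-norm, uniformly in $k$, by $C$ and the errors $\|K_i-K_i^{(m)}\|_{p_i}$, while for each fixed $m$ the middle term $T_k(\vec K^{(m)})-T(\vec K^{(m)})$ tends to $0$ weakly by the previous paragraph.

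The main obstacle is the step identifying the weak-$\Sp^p$ limit with the weak-$\Sp^2$ limit $x$: the two weak topologies arise from different dualities ($\Sp^2$ with itself, $\Sp^p$ with $\Sp^{p'}$), and a priori $x$ is only known to lie in $\Sp^2(\hilh)$. Reflexivity of $\Sp^p$, valid precisely because $1<p<\infty$, is what makes the bounded sequence weakly compact, and the density of finite-rank operators in $\Sp^{p'}(\hilh)\cap\Sp^2(\hilh)$ is what lets the two dualities be matched on a common total set. This is exactly where the hypothesis $p\in(1,\infty)$ is used, mirroring the self-adjoint argument of \cite[Lemma 2.3]{Co22}.
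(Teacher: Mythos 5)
Your proposal is correct and takes essentially the same route as the paper: the paper does not write out a proof but instead defers to the argument of \cite[Lemma 2.3]{Co22}, which is precisely what you have reconstructed --- weak-$\Sp^2$ convergence from the $w^{\ast}$-continuity of $\Gamma^{U_1,\ldots,U_n}$, reflexivity of $\Sp^{p}(\hilh)$ to extract weak-$\Sp^p$ cluster points, identification of the two weak limits by testing against finite-rank operators, and a density/telescoping argument to pass from $\Sp^2(\hilh)\cap\Sp^{p_i}(\hilh)$ to general $K_i\in\Sp^{p_i}(\hilh)$. In particular, your handling of the one delicate point (matching the $\Sp^2$ and $\Sp^{p}$--$\Sp^{p'}$ dualities on rank-one operators) is exactly where the cited argument uses $1<p<\infty$, so there is no gap.
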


\subsection{Higher order perturbation formula}\label{Subsec2}\hfill\\

This section is devoted to obtaining an important result on boundedness of multiple operator integrals associated to divided differences $f^{[n]}$ in the case when $f\in C^{n}(\cir)$, which will justify that all the operators appearing in the sequel are well-defined. We also prove a higher order perturbation formula for differences of multiple operator integrals.

We first recall the definition of the divided difference. The zeroth order divided difference of $f$ is the function itself, that is, $f^{[0]}:=f$. Let $f\in C^{1}(\cir)$. The divided difference of the first order $f^{[1]}:\cir^{2}\to\C$ is defined by
\begin{align*}
f^{[1]}(\lambda_1,\lambda_2):=\begin{cases*}
\frac{f(\lambda_1)-f(\lambda_2)}{\lambda_1-\lambda_2}\ \ \ \text{if}\ \lambda_1\neq\lambda_2\\
f'(\lambda_1)\ \ \ \ \ \ \ \ \text{if}\ \lambda_1=\lambda_2,
\end{cases*}\quad\lambda_1,\lambda_2\in\cir.
\end{align*}
The function $f^{[1]}$ belongs to $C(\cir^{2})$. If $n\ge 2$ and $f\in C^{n}(\cir)$, the divided difference of $n$th order $f^{[n]}:\cir^{n+1}\to\C$ is defined recursively by
\begin{align*}
f^{[n]}(\lambda_1,\lambda_2,\ldots,\lambda_{n+1}):=\begin{cases*}
\frac{f^{[n-1]}(\lambda_1,\lambda_{3},\ldots,\lambda_{n+1})-f^{[n-1]}(\lambda_2,\lambda_{3},\ldots,\lambda_{n+1})}{\lambda_1-\lambda_2}\ \ \ \text{if}\ \lambda_1\neq\lambda_2\\
\partial_{1}f(\lambda_1,\lambda_{3},\ldots,\lambda_{n+1})\ \ \ \ \ \ \ \ \ \ \ \ \ \ \ \ \ \ \ \ \,\text{if}\ \lambda_1=\lambda_2,
\end{cases*}
\end{align*}
for all $\lambda_1,\ldots,\lambda_{n+1}\in\cir$, where $\partial_{i}$ stands for the partial derivative with respect to the $i$th variable. Moreover the function $f^{[n]}\in C(\cir^{n+1})$.

We are now ready to establish the bound of the multiple operator integral \eqref{MOI} associated with the symbol $\varphi=f^{[n]}$ for $1<p,p_{j}<\infty,~j=1,\ldots,n$ and $f\in C^{n}(\cir)$. This class extends earlier known function classes used to give bounds to the multiple operator integral considered in \cite{Sk17}. We also refer to \cite{AlNaPe2016} and \cite{AlPe2016} for a different kind of $\mathcal{S}^p$-estimates in the case when the symbol is in the Haagerup tensor product of $L^{\infty}$-spaces. The result of Theorem \ref{MOISpBound} below is outlined in \cite[Remark 4.3.20]{SkToBook} without proof. For our purposes, we will state the theorem here and give a simple proof.

\begin{thm}\label{MOISpBound}
Let $n\in\N$ and $f\in C^{n}(\cir)$. Let $1<p,p_{j}<\infty$, $j=1,\ldots,n$ be such that $\frac{1}{p}=\frac{1}{p_{1}}+\cdots+\frac{1}{p_{n}}$. Let $U_1, \ldots, U_{n+1}$ be unitary operators on $\mathcal{H}$. Then
$$\Gamma^{U_1,\ldots,U_{n+1}}(f^{[n]})\in\mathcal{B}_{n}(\Sp^{p_{1}}(\hilh)\times\cdots\times\Sp^{p_{n}}(\hilh),\Sp^p(\hilh))$$
and there exists $c_{p,n}>0$ depending only on $p$ and $n$ such that for any $K_{i}\in\Sp^{p_{i}}(\hilh),~1\le i\le n$,
\begin{align}\label{MOIBound1}
\left\|\left[\Gamma^{U_1,\ldots,U_{n+1}}(f^{[n]})\right](K_{1},\ldots,K_{n})\right\|_{p}\leq c_{p,n}\left\|f^{(n)}\right\|_{\infty}\|K_{1}\|_{p_{1}}\cdots\|K_{n}\|_{p_{n}}.
\end{align}
In particular, $\Gamma^{U_1,\ldots,U_{n+1}}(f^{[n]})\in\mathcal{B}_{n}(\Sp^{p}(\hilh))$, with
\begin{align}\label{MOIBound2}
\left\|\Gamma^{U_1,\ldots,U_{n+1}}(f^{[n]})\right\|_{\mathcal{B}_{n}(\Sp^{p})}\leq c_{p,n} \left\|f^{(n)}\right\|_{\infty}.
\end{align}
\end{thm}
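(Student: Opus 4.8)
The plan is to prove the multilinear inequality \eqref{MOIBound1} (which contains \eqref{MOIBound2} as the diagonal case $p_1=\cdots=p_n=p$) by a soft approximation argument: I would first secure the bound on a dense subclass of $C^n(\cir)$ and then transport it to all of $C^n(\cir)$ via Lemma \ref{MOIProperty1}. That lemma is tailor-made for this, since it converts a uniform $\mathcal{B}_{n}$-bound along a $w^\ast$-convergent sequence of symbols into a bound for the limiting symbol, with the $\liminf$ of the constants; and because it is stated for arbitrary Hölder exponents, one approximation handles all exponent configurations at once.

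The dense-class input I would record is the following: letting $\mathcal P\subset C^n(\cir)$ be the trigonometric polynomials (dense in $C^n(\cir)$ for $\|g\|_{C^n}=\sum_{k=0}^n\|g^{(k)}\|_\infty$), for each $g\in\mathcal P$ the symbol $g^{[n]}$ lies in the smaller symbol class treated in \cite[Theorem 3.3]{Sk17}, and that result yields, for $1<p,p_j<\infty$ with $\frac1p=\sum_j\frac1{p_j}$ and all unitaries $U_1,\dots,U_{n+1}$,
\[
\big\|\Gamma^{U_1,\dots,U_{n+1}}(g^{[n]})\big\|_{\mathcal{B}_{n}(\Sp^{p_1}\times\cdots\times\Sp^{p_n},\,\Sp^p)}\le c_{p,n}\,\|g^{(n)}\|_\infty ,
\]
with $c_{p,n}$ depending only on $p$ and $n$. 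This is precisely the estimate that Theorem \ref{MOISpBound} upgrades from $\mathcal P$ to all of $C^n(\cir)$. Fixing $f\in C^n(\cir)$ and $f_N\in\mathcal P$ with $f_N\to f$ in $C^n(\cir)$, two properties of divided differences are then used: $\|f_N^{(n)}\|_\infty\to\|f^{(n)}\|_\infty$, so the constants $c_{p,n}\|f_N^{(n)}\|_\infty$ are bounded; and the map $g\mapsto g^{[n]}$ is continuous from $C^n(\cir)$ into $C(\cir^{n+1})$, so $f_N^{[n]}\to f^{[n]}$ uniformly on $\cir^{n+1}$, whence (after restriction to $\sigma(U_1)\times\cdots\times\sigma(U_{n+1})$) norm, and in particular $w^\ast$, convergence in $L^\infty(\prod_i\lambda_{U_i})$. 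Applying Lemma \ref{MOIProperty1} (with $n+1$ unitaries and the $n$-linear maps) to the uniformly bounded, $w^\ast$-convergent sequence $\big(\Gamma^{U_1,\dots,U_{n+1}}(f_N^{[n]})\big)_N$ gives $\Gamma^{U_1,\dots,U_{n+1}}(f^{[n]})\in\mathcal{B}_{n}(\Sp^{p_1}\times\cdots\times\Sp^{p_n},\Sp^p)$ with
\[
\big\|\Gamma^{U_1,\dots,U_{n+1}}(f^{[n]})\big\|_{\mathcal{B}_{n}}\le\liminf_N\big\|\Gamma^{U_1,\dots,U_{n+1}}(f_N^{[n]})\big\|_{\mathcal{B}_{n}}\le\liminf_N c_{p,n}\|f_N^{(n)}\|_\infty=c_{p,n}\|f^{(n)}\|_\infty ,
\]
which is exactly \eqref{MOIBound1}.

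The main obstacle is the dense-class estimate itself, namely obtaining a constant involving only the top derivative $\|f^{(n)}\|_\infty$. The naive route of expanding $f^{[n]}$ over the Fourier series of $f$ and bounding each monomial symbol by Hölder's inequality discards all cancellation and only produces a Besov-type constant comparable to $\|f\|_{B^n_{\infty 1}}$ — this is why \cite{Pe06,PoSkSu16} had to assume $f\in B^n_{\infty 1}(\cir)$. Likewise the recursive identity $f^{[n]}=(f^{[n-1]})^{[1]}$ does \emph{not} reduce cleanly to the first-order estimate, because in the induced operator expression the shared unitary $U_2$ sits on opposite sides of a Schatten factor, and functions of $U_2$ there do not recombine. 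The genuine analytic content is the $\Sp^p$-boundedness of the associated (multilinear) Schur multiplier for $1<p<\infty$, resting on the boundedness of triangular truncation on $\Sp^p$ (equivalently the UMD property of $\Sp^p$); this is exactly what confines the statement to $1<p<\infty$ and fails at the endpoints. A secondary technical point, needed to guarantee the $w^\ast$-convergence above, is the continuity of $g\mapsto g^{[n]}$ near the diagonal of $\cir^{n+1}$, which I would handle through a Hermite–Genocchi/Taylor expansion of the divided difference.
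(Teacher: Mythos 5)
Your proposal follows essentially the same route as the paper's proof: approximate $f$ in the $C^n(\cir)$-norm by trigonometric polynomials (the paper does this explicitly with the Fej\'er means $f\ast F_k$, which via Young's inequality even give the uniform bound $\|\varphi_k^{(n)}\|_\infty\le\|f^{(n)}\|_\infty$ rather than a $\liminf$), invoke Skripka's estimate from \cite{Sk17} on that dense class, upgrade the uniform convergence $f_N^{[n]}\to f^{[n]}$ on $\cir^{n+1}$ to $w^{\ast}$-convergence of the symbols, and conclude with Lemma \ref{MOIProperty1}. Two small corrections are in order. First, \eqref{MOIBound2} is \emph{not} the ``diagonal case $p_1=\cdots=p_n=p$'' of \eqref{MOIBound1}: those exponents violate the H\"older constraint $\frac{1}{p}=\frac{1}{p_1}+\cdots+\frac{1}{p_n}$ unless $n=1$, so as written that step fails. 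The correct (one-line) deduction, which is the one in the paper, is to take $p_1=\cdots=p_n=np$ in \eqref{MOIBound1} and use the monotonicity $\|\cdot\|_{np}\le\|\cdot\|_{p}$ of Schatten norms. Second, the Hermite--Genocchi integral representation is a real-line device and does not transfer directly to $\cir$ (convex combinations of points of the circle leave the circle); the fact you actually need, namely that $g\mapsto g^{[n]}$ is continuous from $C^n(\cir)$ to $C(\cir^{n+1})$, is true but is best quoted from \cite[Lemma 3.2]{Sk17}, exactly as the paper does.
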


\begin{proof} Let $f\in C^{n}(\cir)$. Define, for any $k\ge 1$, $\varphi_k:=f\ast F_k$, where $F_k$ is the Fej\'er kernel. By the definition of the Fej\'er kernel, the function $\varphi_{k}$ a trigonometric polynomial. Then we have that 
$$\left\|\varphi_k^{(m)}-f^{(m)}\right\|_{\infty} \to 0,\quad k\to\infty,$$
for $0\le m\le n$, which, according to \cite[Lemma 3.2]{Sk17}, further implies 
\begin{align}\label{ref1}
\left\|\varphi_k^{[m]}-f^{[m]}\right\|_{L^{\infty}(\cir^{m+1})}\to 0,\quad k\to\infty
\end{align}  
for all $0\le m\le n$. By \cite[Theorem 3.6]{Sk17}, there exists a constant $c_{p,n}>0$ such that, for every $k\ge 1$,
\begin{align}\label{MOIBound3}
\left\|\Gamma^{U_1,\ldots,U_{n+1}}(\varphi_k^{[n]}) \right\|_{\mathcal{B}_{n}(\Sp^{p_{1}}\times\cdots\times\Sp^{p_{n}},\Sp^p)}\le c_{p,n} \left\|\varphi_k^{(n)}\right\|_{\infty}.
\end{align}
Since $\varphi_{k}^{(n)}=F_{k}\ast f^{(n)}$ and $\|F_{k}\|_{1}\le 1$ for all $k\in\N$, from Young's inequality it follows that
\begin{align}\label{ref2}
\|\varphi_{k}^{(n)}\|_{\infty}\le\|F_{k}\|_{1}\|f^{(n)}\|_{\infty}\le\|f^{(n)}\|_{\infty}.
\end{align}
Therefore, by Lemma \ref{MOIProperty1}, \eqref{ref1}, \eqref{MOIBound3} and \eqref{ref2} we deduce that
$$\Gamma^{U_1,\ldots,U_{n+1}}(f^{[n]})\in\mathcal{B}_{n}(\Sp^{p_{1}}(\hilh)\times\cdots\times\Sp^{p_{n}}(\hilh),\Sp^p(\hilh))$$
with 
\begin{align*}
\left\|\Gamma^{U_1,\ldots,U_{n+1}}(f^{[n]})\right\|_{\mathcal{B}_{n}(\Sp^{p_{1}}\times\cdots\times\Sp^{p_{n}},\Sp^p)}\leq c_{p,n} \left\|f^{(n)}\right\|_{\infty},
\end{align*}
from which we deduce \eqref{MOIBound1}. The inequality \eqref{MOIBound2} follows from the fact that $\|\cdot\|_{p_{i}}\le\|\cdot\|_{p}$, for $1\le i\le n$. This completes the proof of the theorem.
\end{proof}

Let $1<p<\infty$. Let $U$ and $V$ be two unitary operators on $\hilh$ and $U-V\in\Sp^p(\hilh)$. Then by \cite[Theorem 2]{AyCoSu16} for every $f\in C^1(\cir)$, $f(U)-f(V)\in\Sp^{p}(\hilh)$. Moreover we have the following formula
\begin{align}\label{PerturbationFormula1}
f(U)-f(V)=\left[\Gamma^{U,V}(f^{[1]})\right](U-V).
\end{align}

We will prove a higher order counterpart of this result, which will allow us to express differences of multiple operator integrals of the form
\begin{align*}
\left[\Gamma^{U_1,\ldots,U_{i-1},U,U_i,\ldots,U_{n-1}}(f^{[n-1]})\right](K_1,\ldots,K_{n-1})-\left[\Gamma^{U_1,\ldots,U_{i-1},V,U_i,\ldots,U_{n-1}}(f^{[n-1]})\right](K_1,\ldots,K_{n-1})
\end{align*}
as a multiple operator integral associated with the function $f^{[n]}$, when $f\in C^{n}(\cir)$ and $U-V\in\Sp^p(\hilh)$.

To that aim we need the following result, which establishes an approximation property of multiple operator integrals.

\begin{ppsn}\label{ApproxResult}
Let  $n\in\N$, $n\ge 2$, let $U_1,\ldots,U_n$ be unitary operators on $\mathcal{H}$ and for all $1\leq i\leq n$, let $(U_i^j)_{j\in \mathbb{N}}$ be a sequence of unitary operators on $\mathcal{H}$ converging to $U_i$ in the strong operator topology (SOT).
Then for any $\varphi\in C(\mathbb{T}^n)$ and for any $K_1,\ldots,K_{n-1}\in \mathcal{S}^2(\mathcal{H})$,
\begin{equation}\label{Approx1}
\underset{j\to\infty}{\lim}\left\|\left[\Gamma^{U_1^j,\ldots,U_n^j}(\varphi)\right](K_1,\ldots,K_{n-1})-\left[\Gamma^{U_1,\ldots,U_n}(\varphi)\right](K_1,\ldots,K_{n-1}) \right\|_2=0.
\end{equation}
\end{ppsn}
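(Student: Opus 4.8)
The plan is to reduce to elementary tensors, where the multiple operator integral is an explicit product of functional-calculus operators interlaced with the Hilbert--Schmidt factors, and then to pass to general $\varphi\in C(\cir^n)$ by a density argument. The point that makes the density step uniform in $j$ is the contractivity of $\Gamma$ recorded in Definition~\ref{DefMOI}: for \emph{any} unitaries $V_1,\ldots,V_n$ and any $\psi\in C(\cir^n)$ one has $\|\widetilde{\psi}\|_{L^{\infty}(\prod\lambda_{V_i})}\le\|\psi\|_{C(\cir^n)}$, whence
$$\left\|\left[\Gamma^{V_1,\ldots,V_n}(\psi)\right](K_1,\ldots,K_{n-1})\right\|_2\le\|\psi\|_{C(\cir^n)}\,\|K_1\|_2\cdots\|K_{n-1}\|_2,$$
with a constant independent of the $V_i$. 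This estimate applies simultaneously to the approximants $U_i^j$ and to the limits $U_i$.

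First I would treat the elementary case $\varphi=g_1\otimes\cdots\otimes g_n$ with $g_i\in C(\cir)$, for which Definition~\ref{DefMOI} gives
$$\left[\Gamma^{V_1,\ldots,V_n}(\varphi)\right](K_1,\ldots,K_{n-1})=g_1(V_1)K_1g_2(V_2)\cdots g_{n-1}(V_{n-1})K_{n-1}g_n(V_n).$$
Two supporting facts are needed. (a) If $V^j\to V$ in SOT among unitaries, then $g(V^j)\to g(V)$ in SOT for every $g\in C(\cir)$, along with the adjoint statement $g(V^j)^*=\bar g(V^j)\to g(V)^*$ in SOT. This follows by approximating $g$ uniformly by trigonometric polynomials (the functional calculus is operator-norm contractive) and using that for unitaries $U^j\to U$ in SOT forces $(U^j)^*\to U^*$ in SOT, since $\|(U^j)^*\xi-U^*\xi\|=\|\xi-U^jU^*\xi\|\to 0$. (b) If $A_j\to A$ in SOT with $\sup_j\|A_j\|<\infty$ and $K\in\Sp^2(\hilh)$, then $A_jK\to AK$ in $\Sp^2$, and likewise $KA_j\to KA$ in $\Sp^2$ provided $A_j^*\to A^*$ in SOT; this is seen by approximating $K$ in $\Sp^2$ by finite-rank operators, where SOT convergence on the finitely many range/domain vectors suffices. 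Combining (a) and (b) and telescoping --- replacing $V_i^j$ by $U_i$ one index at a time, controlling the untouched factors by their uniformly bounded operator norms --- yields $\Sp^2$-convergence of the product, which is precisely \eqref{Approx1} for elementary $\varphi$. By linearity it holds for every $\psi$ in the algebraic tensor product $C(\cir)\otimes\cdots\otimes C(\cir)$.

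Finally I would remove the tensor-product restriction. The span of elementary tensors is a conjugation-closed, point-separating, unital subalgebra of $C(\cir^n)$, hence dense by the Stone--Weierstrass theorem; so given $\varphi\in C(\cir^n)$ and $\varepsilon>0$ I choose such a $\psi$ with $\|\varphi-\psi\|_{C(\cir^n)}<\varepsilon$. Splitting
$$\left[\Gamma^{U_1^j,\ldots,U_n^j}(\varphi)\right](K_1,\ldots,K_{n-1})-\left[\Gamma^{U_1,\ldots,U_n}(\varphi)\right](K_1,\ldots,K_{n-1})$$
into the three pieces built from $\varphi-\psi$ at the $U_i^j$, from $\psi$ at both families, and from $\psi-\varphi$ at the $U_i$, the first and third are bounded by $\varepsilon\prod_i\|K_i\|_2$ using the uniform contractive estimate, while the middle one tends to $0$ by the elementary-tensor case. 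Letting $j\to\infty$ and then $\varepsilon\to 0$ gives \eqref{Approx1}.

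I expect the main obstacle to be the bookkeeping in the product/telescoping step, specifically the need to invoke the adjoint SOT-convergence $g(V_i^j)^*\to g(V_i)^*$ to handle the factors lying to the right of each Hilbert--Schmidt block, since right multiplication by an SOT-convergent sequence is not $\Sp^2$-continuous without passing to adjoints. Everything else is soft functional-analytic approximation; the one genuine structural input is the uniform-in-$j$ contractivity of $\Gamma$, which is what permits interchanging the $C(\cir^n)$-approximation with the limit $j\to\infty$.
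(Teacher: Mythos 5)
Your proposal is correct and takes essentially the same route as the paper: both reduce to elementary tensors $g_1\otimes\cdots\otimes g_n$ via uniform approximation by trigonometric polynomials (legitimized by the contractivity of $\Gamma$ in $\mathcal{B}_{n-1}(\Sp^2(\hilh))$, uniform in the unitaries), and both settle the elementary case through SOT-convergence of the functional calculus $g(U_i^j)\to g(U_i)$ (with adjoints) played against the Hilbert--Schmidt factors. The only difference is one of packaging: the paper additionally reduces to rank-one $K_i$ and cites \cite[Proposition 3.1]{CoLeSkSu17} for the telescoping computation, which you instead carry out explicitly and self-containedly.
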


\begin{proof} 
Let $g\in C(\cir)$. Notice that for every $1\leq i \leq n$, $g(U_i^j)\overset{SOT}{\rightarrow} g(U_i)$. Indeed, by approximation and by linearity, it is enough to prove it when $g$ is a monomial, in which case it simply follows from the fact that $U_i^j\overset{SOT}{\to} U_i$.
	
It is worth noting that proving the proposition suffices when $K_1,\ldots,K_{n-1}$ are rank one operators, and since for any $\varphi\in C(\cir^{n})$, there is a sequence of trigonometric polynomials (in $n$ variables) uniformly converging to $\varphi$, it is enough to assume that $\varphi\in C(\cir)\otimes\cdots\otimes C(\cir)$. The rest of the proof follows from the same computations done in \cite[Proposition 3.1]{CoLeSkSu17}.
\end{proof}

In the next proposition we establish a standard perturbation formula for multiple operator integrals with respect to auxiliary unitary operators when $f\in C^{n}(\cir)$. It extends the function class of \cite[Lemma 2.4]{PoSkSu16}, where such representation was obtained for $f\in B^{n}_{\infty 1}(\cir)$ and generalizes \cite[Lemma 3.3(ii)]{PoSkSuTo17} from $\hilh=\ell^{2}_{d}$ to an arbitrary complex separable Hilbert space $\hilh$.

\begin{ppsn}\label{PerturbationFormula2}
Let $1<p<\infty$ and $n\ge 2$ be an integer. Let $U_1,\ldots,U_{n-1},U,V$ be unitary operators on $\hilh$ and assume that $U-V\in\Sp^p(\hilh)$. Let $f\in C^n(\cir)$. Then, for all $K_1,\ldots,K_{n-1}\in\Sp^p(\hilh)$ and for any $1\le i\le n$ we have
\begin{align*}
&\left[\Gamma^{U_1,\ldots,U_{i-1},U,U_i,\ldots,U_{n-1}}(f^{[n-1]})-\Gamma^{U_1,\ldots,U_{i-1},V,U_i,\ldots,U_{n-1}}(f^{[n-1]})\right](K_1,\ldots,K_{n-1})\\
&\hspace*{0.5cm}=\left[\Gamma^{U_1,\ldots,U_{i-1},U,V,U_i,\ldots,U_{n-1}}(f^{[n]})\right](K_1,\ldots,K_{i-1},U-V,K_i,\ldots,K_{n-1}).
\end{align*}
\end{ppsn}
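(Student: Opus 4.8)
The plan is to prove the identity first when $f$ is a trigonometric polynomial, where it becomes a finite algebraic computation, and then to obtain the general case $f\in C^n(\cir)$ by approximation. For the reduction I would take $\varphi_k=f\ast F_k$ with $F_k$ the Fej\'er kernel, so that each $\varphi_k$ is a trigonometric polynomial and $\varphi_k\to f$ in $C^n(\cir)$; exactly as in the proof of Theorem \ref{MOISpBound}, \cite[Lemma 3.2]{Sk17} then yields $\varphi_k^{[n-1]}\to f^{[n-1]}$ and $\varphi_k^{[n]}\to f^{[n]}$ in $L^\infty$, hence in the $w^\ast$-topology. Both sides of the asserted equality are multiple operator integrals evaluated at fixed operators, so, combining the uniform $\Sp^p$-bounds of Theorem \ref{MOISpBound} (used in orders $n-1$ and $n$) with Lemma \ref{MOIProperty1}, the expressions attached to $\varphi_k$ converge weakly, in the relevant Schatten space, to those attached to $f$. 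Thus it is enough to establish the identity for trigonometric polynomials.

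By linearity of $\Gamma^{\cdots}$ in its symbol, this further reduces to a single monomial $f=\ell^m$, $m\in\Z$, where $\ell(\lambda)=\lambda$ is the coordinate function. For such $f$ the divided differences are completely explicit (for $m\ge 0$ they are the complete homogeneous symmetric polynomials, with analogous finite expressions for $m<0$), and the whole statement turns into an algebraic identity among products of the unitaries. The driving relation is the divided-difference recursion in the variables $i,i+1$,
\[
(\lambda_i-\lambda_{i+1})\,f^{[n]}(\lambda_1,\ldots,\lambda_{n+1})=f^{[n-1]}(\lambda_1,\ldots,\lambda_i,\lambda_{i+2},\ldots,\lambda_{n+1})-f^{[n-1]}(\lambda_1,\ldots,\lambda_{i-1},\lambda_{i+1},\ldots,\lambda_{n+1}),
\]
which is legitimate because the divided differences are symmetric functions of their arguments. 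When one expands the right-hand side $\bigl[\Gamma^{U_1,\ldots,U_{i-1},U,V,U_i,\ldots,U_{n-1}}(f^{[n]})\bigr](K_1,\ldots,K_{i-1},U-V,K_i,\ldots,K_{n-1})$ on a monomial symbol, the factors attached to the unitary slots $i$ and $i+1$, together with the inserted argument $U-V$, appear precisely as $U^{c}(U-V)V^{d}$. Summing over the exponents that the recursion distributes between these two slots, the telescoping identity $\sum_{c+d=s}U^{c}(U-V)V^{d}=U^{s+1}-V^{s+1}$ (and its negative-power analogue) collapses each such block to a single power $U^{s+1}$ or $V^{s+1}$ sitting at slot $i$. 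Reindexing then recombines the result into $\bigl[\Gamma^{\cdots U\cdots}(f^{[n-1]})\bigr](K_1,\ldots,K_{n-1})-\bigl[\Gamma^{\cdots V\cdots}(f^{[n-1]})\bigr](K_1,\ldots,K_{n-1})$, which is exactly the left-hand side.

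Note that this entire computation is local to the slots $i$ and $i+1$: the unitaries and arguments at the other positions are carried along unchanged, so the general $i$ is handled exactly like $i=1$, the only input being the recursion written in the variables $i,i+1$. This keeps the algebra clean and, in particular, never requires inserting the identity operator as an argument.

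I expect the genuine difficulty to lie in the passage from trigonometric polynomials to arbitrary $f\in C^n(\cir)$, not in the algebra. Two multiple operator integrals of \emph{different} orders ($n-1$ and $n$) occur on the two sides, so one must control the symbols $\varphi_k^{[n-1]}$ and $\varphi_k^{[n]}$ simultaneously and ensure both families of MOIs converge to the correct limits; moreover the two sides a priori take values in different Schatten ideals, the one on the right sitting inside the one on the left. The cleanest route is likely to prove the identity first for $K_1,\ldots,K_{n-1}$ in the dense subspace $\Sp^2(\hilh)\cap\Sp^p(\hilh)$, where the $w^\ast$-continuity of $\Gamma$ into $\mathcal{B}_{n-1}(\Sp^2(\hilh))$ recorded after Definition \ref{DefMOI} (together with Proposition \ref{ApproxResult} and Lemma \ref{MOIProperty1}) supplies the required weak convergences, and then to extend to arbitrary $K_i\in\Sp^p(\hilh)$ using the $\Sp^p$-bounds of Theorem \ref{MOISpBound} and the boundedness of the extension \eqref{MOI}. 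Keeping track of which Schatten norm controls each factor across the two orders, so that all the weak limits are legitimate, is the point demanding the most care.
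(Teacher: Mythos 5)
Your main line of argument (the first two paragraphs) is correct, and it is genuinely different from --- in fact more streamlined than --- the paper's proof. The paper splits into cases: for $p=2$ it reduces to trigonometric polynomials and performs the same telescoping algebra you describe; for $1<p<2$ it invokes $\Sp^p(\hilh)\subset\Sp^2(\hilh)$; and for $2<p<\infty$, where $U-V\in\Sp^p(\hilh)$ need not be Hilbert--Schmidt, it writes $V=e^{iA}U$ with $A\in\Sp^p_{sa}(\hilh)$, approximates $A$ by $A_j\in\Sp^2_{sa}(\hilh)$, sets $V_j=e^{iA_j}U$, and passes to the limit using Duhamel's formula, Proposition \ref{ApproxResult}, and a uniform-boundedness/$\epsilon$ argument, before finally extending from $K_i\in\Sp^2(\hilh)$ to $K_i\in\Sp^p(\hilh)$ by density. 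Your route replaces all of this by a single limit passage valid for every $p\in(1,\infty)$: Fej\'er approximants, the uniform bounds of Theorem \ref{MOISpBound} at orders $n-1$ and $n$ (with exponents $(n-1)p$, resp.\ $np$, and target $\Sp^p(\hilh)$), and Lemma \ref{MOIProperty1} to obtain weak $\Sp^p$-convergence of both sides, so that the identity for the polynomials $\varphi_k$ passes to $f$. The one step you use silently and must make explicit is this: your telescoping computation inserts $U-V\in\Sp^p(\hilh)$ and $K_i\in\Sp^p(\hilh)$ into an MOI with polynomial symbol, whereas the product formula defining $\Gamma$ is, by Definition \ref{DefMOI}, only given on $\Sp^2(\hilh)$-arguments; you need to note that for a trigonometric-polynomial symbol the extension \eqref{MOI} is again the product formula, since both maps are bounded on the relevant Schatten spaces (by H\"older) and agree on the dense subspaces $\Sp^2(\hilh)\cap\Sp^{np}(\hilh)$. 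With that remark, the polynomial identity holds for all $\Sp^p$-arguments and your weak-limit argument closes the proof.

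Your final paragraph, by contrast, misdiagnoses the residual difficulty, and implementing it as stated would fail. Restricting $K_1,\ldots,K_{n-1}$ to $\Sp^2(\hilh)\cap\Sp^p(\hilh)$ does not make the $w^{\ast}$-continuity of $\Gamma$ into $\mathcal{B}_{n-1}(\Sp^2(\hilh))$ applicable, because the problematic argument on the right-hand side is $U-V$ itself: for $p>2$ it need not lie in $\Sp^2(\hilh)$, so the right-hand side is not even defined in the pure $\Sp^2$-framework, no matter how nice the $K_i$ are. This is precisely the obstruction that forces the paper into its $2<p<\infty$ case (approximating $V$ by unitaries $V_j$ with $U-V_j\in\Sp^2(\hilh)$, which is why Duhamel's formula and Proposition \ref{ApproxResult} appear there). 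So drop that paragraph and commit to the Lemma \ref{MOIProperty1}-based passage of your first paragraph, which requires no $\Sp^2$-restriction on any argument.
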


\begin{proof} 
Let us start with the case $p=2$. As in the proof of Theorem \ref{MOISpBound}, we define $\varphi_k :=f\ast F_k$ where $F_k$ is the Fej\'er kernel. Since \begin{align*}
(\varphi_k)^{[n-1]}\underset{k\to\infty}{\to}f^{[n-1]}\quad\text{and}\quad(\varphi_k)^{[n]} \underset{k\to\infty}{\to}f^{[n]}
\end{align*} 
uniformly on $\mathbb{T}^n$ and $\mathbb{T}^{n+1}$ respectively, it is enough to prove the formula for $\varphi_k$ instead of $f$. Hence, we can assume that $f$ is a trigonometric polynomial and by linearity, we only have to treat the case when $f(z)=z^m$ for some $m\in\mathbb{Z}$. From now on, the proof simply consists in algebraic identities.
	
If $m\in\N$, it is straightforward to check that $f^{[n-1]}$ can be written as a finite sum $f^{[n-1]}=\sum a_k\psi_{\tilde{k}}$, where $\psi_{\tilde{k}}:=\psi_{k_1,\ldots,k_n}: (u_1,\ldots,u_n)\mapsto u_1^{k_1}u_2^{k_2}\cdots u_n^{k_n}$, $k_j\ge 0$, see for instance \cite[Lemma 2.1]{Sk17}. In such case, we have, for $1\le i\le n$,
\begin{align*}
f^{[n]}(u_1,\ldots,u_{n+1})
&=\sum a_k u_1^{k_1}\cdots u_{i-1}^{k_{i-1}} \frac{u_i^{k_i} - u_{i+1}^{k_i}}{u_i-u_{i+1}} u_{i+2}^{k_{i+1}} \cdots  u_{n+1}^{k_{n}}\\
&=\sum a_k\sum_{p=0}^{k_i-1}u_1^{k_1} \cdots u_{i-1}^{k_{i-1}}u_i^pu_{i+1}^{k_i-1-p} u_{i+2}^{k_{i+1}}\cdots u_{n+1}^{k_{n}}.
\end{align*}
Hence, by definition of multiple operator integrals, we have
\begin{align*}
&\left[\Gamma^{U_1,\ldots,U_{i-1},U,V,U_i,\ldots,U_{n-1}}(f^{[n]})\right](K_1,\ldots,K_{i-1},U-V,K_i,\ldots,K_{n-1})\\
&=\sum a_k U_1^{k_1}K_1\cdots K_{i-2}U_{i-1}^{k_{i-1}}K_{i-1}\left(\sum_{p=0}^{k_i-1} U^{p}(U-V)V^{k_i-1-p}\right)K_{i}U_i^{k_{i+1}}K_{i+1}\cdots K_{n-1}U_{n-1}^{k_{n}}\\
&=\sum a_k U_1^{k_1}K_1\cdots K_{i-2}U_{i-1}^{k_{i-1}}K_{i-1}\left(U^{k_i}-V^{k_i}\right)K_{i}U_i^{k_{i+1}}K_{i+1}\cdots K_{n-1}U_{n-1}^{k_{n}},
\end{align*}
which in turn is equal to 
\begin{align*}
\left[\Gamma^{U_1,\ldots,U_{i-1},U,U_i,\ldots,U_{n-1}}(f^{[n-1]})\right](K_1,\ldots,K_{n-1})-\left[\Gamma^{U_1,\ldots,U_{i-1},V,U_i,\ldots,U_{n-1}}(f^{[n-1]})\right] (K_1,\ldots,K_{n-1}).
\end{align*}
If $m\le 0$, then $f^{[n-1]}$ can be written as a linear combination of functions of the form $(u_1,\ldots,u_n)\mapsto u_1^{-k_1}u_2^{-k_2}\cdots u_n^{-k_n}$, where $k_j\ge 0$, so that $f^{[n]}$ can be written as the linear combination of the functions \begin{align*}
(u_1,\ldots,u_{n+1})\mapsto-u_1^{-k_1}\cdots u_{i-1}^{-k_{i-1}}\left(\sum_{p=0}^{k_i-1}u_i^{-1-p}u_{i+1}^{-k_i+p}\right)u_{i+2}^{-k_{i+1}}\cdots u_{n+1}^{-k_{n}}
\end{align*}
(see \cite[Lemma 2.1]{Sk17}). We conclude the result with similar computations as in the previous case. Hence, we proved the formula when $p=2$.
	
Assume now that $1<p<2$. Since $\Sp^p(\hilh)\subset \Sp^2(\hilh)$, the formula holds true as well.	
	
Finally, assume that $2<p<\infty$. Recall that $\Sp^2(\hilh)$ is a dense subspace of $\Sp^p(\hilh)$ and that $\|\cdot\|_p\le\|\cdot\|_2$. Since $U-V\in\Sp^p(\hilh)$, there exists an operator $A\in\Sp^p_{sa}(\hilh)$ such that $V=e^{iA}U$. Let $(A_j)_j\subset\Sp^2_{sa}(\hilh)$ be a sequence of self-adjoint elements converging to $A$ in $\|\cdot\|_p$ and denote $V_j=e^{iA_j}U$. For every $j$, $V_{j}-U\in\Sp^2(\hilh)$ so that, by the case $p=2$, for every $K_1,\ldots,K_{n-1}\in\Sp^2(\hilh)$,
\begin{align}
\nonumber&\left[\Gamma^{U_1,\ldots,U_{i-1},U,U_i,\ldots,U_{n-1}}(f^{[n-1]})-\Gamma^{U_1,\ldots,U_{i-1},V_j,U_i,\ldots,U_{n-1}}(f^{[n-1]})\right](K_1,\ldots,K_{n-1})\\
\label{ref12}&\hspace*{0.5cm}=\left[\Gamma^{U_1,\ldots,U_{i-1},U,V_j,U_i,\ldots,U_{n-1}}(f^{[n]})\right](K_1,\ldots,K_{i-1},U-V_j,K_i,\ldots,K_{n-1}).
\end{align}
Note that $V_j-V=(e^{iA_j}-e^{iA})U$ so that $V_j-V\in\Sp^p(\hilh)$ and by Duhamel's formula (see, e.g., \cite[Lemma 5.2]{AzCaDoSu09}) we have the following
$$\|V_j-V\|_p \leq \|A_j-A\|_p,$$ 
which goes to $0$ as $j\to\infty$. Let $\epsilon>0$. Let $N$ be large enough so that
\begin{equation}\label{ref3}
\forall j\ge N,\ \|V_j-V\|_p <\epsilon,
\end{equation}
and let $B\in\Sp^2(\hilh)$ be such that
\begin{equation}\label{ref4}
\|U-V-B\|_p <\epsilon.
\end{equation}
By Proposition \ref{ApproxResult}, the term
$$\left[ \Gamma^{U_1,\ldots,U_{i-1},V_j,U_i,\ldots,U_{n-1}}(f^{[n-1]})\right](K_1,\ldots,K_{n-1})$$
converges in $\Sp^2(\hilh)$ to
$$\left[\Gamma^{U_1,\ldots,U_{i-1},V,U_i,\ldots,U_{n-1}}(f^{[n-1]})\right](K_1,\ldots,K_{n-1}).$$
To deal with the convergence of the right-hand side of \eqref{ref12}, let us denote, for $j\in\N$,
$$\Gamma_j :X\in\Sp^p(\hilh)\mapsto\left[\Gamma^{U_1,\ldots, U_{i-1},U,V_j,U_i,\ldots,U_{n-1}}(f^{[n]})\right](K_1, \ldots, K_{i-1}, X, K_i,\ldots,K_{n-1})$$
and
$$\Gamma :X\in\Sp^p(\hilh)\mapsto\left[\Gamma^{U_1,\ldots,U_{i-1},U,V,U_i,\ldots,U_{n-1}}(f^{[n]})\right](K_1,\ldots,K_{i-1}, X, K_i,\ldots,K_{n-1}).$$
We have the following identity in $\Sp^p(\hilh)$: for every $j\ge N$,
\begin{align*}
&\Gamma_j(U-V_j)-\Gamma(U-V)\\
&=\Gamma_j(V-V_j)+\Gamma_j(U-V-B)+(\Gamma_j(B)-\Gamma(B))+\Gamma(B-(U-V)).
\end{align*}
By Proposition \ref{ApproxResult}, there exists $N'\in\N$ such that
$$\forall j\ge N',\ \|\Gamma_j(B)-\Gamma(B)\|_p\le\|\Gamma_j(B)-\Gamma(B)\|_2 <\epsilon.$$
Next, by Theorem \ref{MOISpBound}, $\left\lbrace \Gamma_j\mid j\in\N\right\rbrace\cup\left\lbrace\Gamma\right\rbrace\subset\mathcal{B}(\Sp^p(\hilh))$ is uniformly bounded. Hence, there exists a constant $C$ such that, using \eqref{ref3} and \eqref{ref4}, 
$$\|\Gamma_j(V-V_j)\|_p\leq C\|V-V_j\|_p <C\epsilon, \ \|\Gamma_j(U-V-B)\|_p< C\epsilon\quad\text{and}\quad\|\Gamma(B-(U-V))\|_p<C\epsilon.$$
Hence, for every $j\ge\max\{N,N'\}$,
$$\|\Gamma_j(U-V_j)-\Gamma(U-V)\|_p\le(3C+1)\epsilon.$$
This concludes the proof when $K_1,\ldots,K_{n-1}\in\Sp^2(\hilh)$ and $U-V\in\Sp^p(\hilh),\,p>2$. By approximation and boundedness of operator integrals on $\Sp^p(\hilh)$ given by Theorem \ref{MOISpBound}, we obtain the general case. This completes the proof in full generality.
\end{proof} 

This section concludes with the following lemma, which is a straightforward consequence of Proposition \ref{PerturbationFormula2}.

\begin{lma}\label{PerturbationFormula3}
Let $1<p<\infty$ and $n\ge2$ be an integer. Let $f\in C^{n}(\cir)$. Let $U,V$ be two unitary operators with $U-V\in\Sp^{p}(\hilh)$, and let $K_1,\ldots,K_{n-1}\in\Sp^{p}(\hilh)$. Then, for $1\le k\le n$
\begin{align*}
&\left[\Gamma^{(U)^{k},(V)^{n-k}}(f^{[n-1]})-\Gamma^{(V)^n}(f^{[n-1]})\right](K_1,\ldots,K_{n-1})\\
&\hspace*{0.5cm}=\sum_{i=1}^{k}\left[\Gamma^{(U)^{i},(V)^{n-i+1}}(f^{[n]})\right](K_1,\ldots,K_{i-1},U-V,K_i,\ldots,K_{n-1}).
\end{align*}
\end{lma}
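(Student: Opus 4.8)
The plan is to prove the identity by a telescoping argument that reduces it to $k$ successive applications of Proposition \ref{PerturbationFormula2}. The key observation is that the two multiple operator integrals on the left-hand side, $\Gamma^{(U)^{k},(V)^{n-k}}(f^{[n-1]})$ and $\Gamma^{(V)^n}(f^{[n-1]})$, differ only in that the first $k$ unitary slots carry $U$ rather than $V$. I would interpolate between them by switching these slots from $V$ to $U$ one at a time. Concretely, for $0\le i\le k$ set
$$T_i:=\Gamma^{(U)^{i},(V)^{n-i}}(f^{[n-1]}),$$
which is a well-defined bounded $(n-1)$-linear map on $\Sp^p(\hilh)$ by Theorem \ref{MOISpBound} (applied with $n-1$ in place of $n$), since $f^{[n-1]}$ is the symbol of an MOI associated with $n$ unitaries. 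Then $T_k$ and $T_0$ are exactly the two terms appearing on the left-hand side, and writing it as the telescoping sum
$$T_k-T_0=\sum_{i=1}^{k}\left(T_i-T_{i-1}\right)$$
reduces the task to identifying each consecutive difference $T_i-T_{i-1}$ with the corresponding summand on the right.

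The second step is to apply Proposition \ref{PerturbationFormula2} to each difference. The maps $T_i$ and $T_{i-1}$ agree in every slot except the $i$-th, which carries $U$ in $T_i$ and $V$ in $T_{i-1}$. Thus I would invoke Proposition \ref{PerturbationFormula2} at position $i$, choosing its auxiliary unitaries to be $U$ in the first $i-1$ slots and $V$ in the last $n-i$ slots; this yields, for all $K_1,\ldots,K_{n-1}\in\Sp^p(\hilh)$,
$$\left(T_i-T_{i-1}\right)(K_1,\ldots,K_{n-1})=\left[\Gamma^{(U)^{i},(V)^{n-i+1}}(f^{[n]})\right](K_1,\ldots,K_{i-1},U-V,K_i,\ldots,K_{n-1}),$$
where I have relabelled the symbols using $(U)^{i-1},U=(U)^{i}$ and $V,(V)^{n-i}=(V)^{n-i+1}$. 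Summing this identity over $i=1,\ldots,k$ and comparing with the telescoping sum above gives precisely the claimed right-hand side.

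The argument carries no genuine analytic difficulty: the passage from $f^{[n-1]}$ to $f^{[n]}$, together with all the $\Sp^p$-convergence and approximation issues, is already subsumed in Proposition \ref{PerturbationFormula2}. The only point demanding care is the index bookkeeping. I would check carefully that applying Proposition \ref{PerturbationFormula2} with the insertion performed at position $i$ and with the prescribed auxiliary unitaries indeed produces the symbol $\Gamma^{(U)^{i},(V)^{n-i+1}}(f^{[n]})$ (an MOI on $n+1$ unitaries, matching the order of $f^{[n]}$) with the perturbation $U-V$ inserted in the $i$-th argument slot, so that the telescoped differences line up exactly with the summands indexed by $i$ on the right-hand side.
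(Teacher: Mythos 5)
Your proposal is correct and follows essentially the same route as the paper: the paper also writes the left-hand side as the telescoping sum $\sum_{i=1}^{k}\left[\Gamma^{(U)^{i},(V)^{n-i}}(f^{[n-1]})-\Gamma^{(U)^{i-1},(V)^{n-i+1}}(f^{[n-1]})\right]$ and then applies Proposition \ref{PerturbationFormula2} to each consecutive difference. Your index bookkeeping (choosing the auxiliary unitaries to be $U$ in the first $i-1$ slots and $V$ in the last $n-i$ slots) is exactly what makes that application valid, so nothing is missing.
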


\begin{proof}
We have 
\begin{align}
\nonumber&\left[\Gamma^{(U)^{k},(V)^{n-k}}(f^{[n-1]})-\Gamma^{(V)^n}(f^{[n-1]})\right](K_1,\ldots,K_{n-1})\\
\label{ref13}&\hspace*{2.5cm}=\sum_{i=1}^{k}\left[\Gamma^{(U)^{i},(V)^{n-i}}(f^{[n-1]})-\Gamma^{(U)^{i-1},(V)^{n-i+1}}(f^{[n-1]})\right](K_1,\ldots,K_{n-1}).
\end{align}
Now applying Proposition \ref{PerturbationFormula2} to \eqref{ref13} we complete the proof.
\end{proof}

\section{Differentiability in $\Sp^p(\hilh)$, $1<p<\infty$}\label{Sec3}

\subsection{Statements of the main results}\label{Subsec3}\hfill\\

In this section, we state our main results on differentiability of functions of unitary operators in $\Sp^{p}$-norms, for $1<p<\infty$.

We start by defining the G\^ateaux and Fr\'echet differentiability of operator functions. In this paper, we explore the concept of higher order Fr\'echet and G\^ateaux differentiability of $f\in\text{Lip}(\cir)$ at $U(t)\in\mathcal{U}(\hilh)$ as discussed below.

Let $U:\R\rightarrow\mathcal{U}(\hilh)$ be such that the function $\tilde{U}:t\in\R\mapsto U(t)-U(0)\in\Sp^p(\hilh)$. Let $f\in\text{Lip}(\cir)$, $1<p<\infty$. By \cite[Theorem 2]{AyCoSu16}, the function 
\begin{align}
\varphi_{U,f,p}:t\in\R\mapsto f(U(t))-f(U(0))\in\Sp^{p}(\hilh)
\end{align}
is well-defined.

\begin{dfn}[G\^ateaux derivative]\label{Gateauxderivativedef}
Let $n\in\N$ and $\tilde{U}\in D^{n}(\R,\Sp^{p}(\hilh))$. A function $f\in\text{Lip}(\cir)$ is said to be $n$ times G\^ateaux $\Sp^{p}$-differentiable at $U(t)$ if the function $\varphi_{U,f,p}$ is $n-1$ times $\Sp^{p}$-differentiable in a neighborhood of $t$ and $$\varphi^{(n-1)}_{U,f,p} : \mathbb{R} \to \Sp^{p}(\hilh)$$ is $\Sp^{p}$-differentiable at $t$. The expression $\varphi^{(n)}_{U,f,p}(t)$ denotes the $n$th G\^ateaux $\Sp^{p}$-derivative of $f$ at $U(t)$.
\end{dfn}

Let $U,V\in\mathcal{U}(\hilh)$. By \enquote{$V$ in the $\Sp^{p}$-neighborhood of $U$} we mean that
\begin{align*}
V\in\left\{X\in\mathcal{U}(\hilh):X-U\in\mathcal{W}\right\},
\end{align*}
where $\mathcal{W}$ is a $\Sp^{p}$-neighborhood of $0$.
A function $f\in\text{Lip}(\cir)$ is said to be Fr\'echet $\Sp^{p}$-differentiable at $U\in\mathcal{U}(\hilh)$ if there exists a bounded operator
$$D^1_{F,p}f(U)\in\mathcal{B}(\Sp^{p}(\hilh))$$
satisfying
$$\|f(V)-f(U)-D^1_{F,p}f(U)(V-U)\|_p=o(\|V-U\|_p),$$
for $V$ in a $\Sp^p$-neighborhood of $U$. Note that $V$ being in such neighborhood means that $V=e^{iA}U$, where $A\in\Sp^{p}_{sa}(\hilh)$ and $\|e^{iA}-I\|_{p}$ is small enough.

\begin{dfn}[Fr\'echet derivative]\label{Frechetderivativedef}
Let $n\in\N$. We say that $f\in\text{Lip}(\cir)$ is $n$ times Fr\'echet $\Sp^{p}$-differentiable at $U\in \mathcal{U}(\hilh)$ if it is $n-1$ times Fr\'echet $\Sp^{p}$-differentiable in a $\Sp^p$-neighborhood of $U$ and there is a $n$-linear bounded operator
$$D^n_{F,p}f(U)\in\mathcal{B}_n(\Sp^{p}(\hilh))$$
satisfying, for every $X_1,\ldots,X_{n-1}\in\Sp^{p}(\hilh)$,
\begin{align*}
&\|(D_{F,p}^{n-1}f(V)-D_{F,p}^{n-1}f(U))(X_1,\ldots,X_{n-1})-D_{F,p}^nf(U)(X_1,\ldots,X_{n-1},V-U)\|_p\\
&=o(\|V-U\|_p)\|X_1\|_p\cdots\|X_{n-1}\|_p,
\end{align*}
as $\|V-U\|_p\to 0$, for $V$ in a $\Sp^p$-neighborhood of $U$.
\end{dfn}

Next, we say that $f$ is $n$ times continuously Fr\'echet $\Sp^{p}$-differentiable at $U\in \mathcal{\hilh}$ if it is $n$ times Fr\'echet $\Sp^{p}$-differentiable in a $\Sp^p$-neighborhood of $U$ and for every $X_1,\ldots,X_n\in\Sp^{p}(\hilh)$ and $V$ in a $\Sp^p$-neighborhood of $U$, 
$$\|D^{n}_{F,p}f(V)(X_1,\ldots,X_n)-D^{n}_{F,p}f(U)(X_1,\ldots,X_n)\|=o(1)\|X_1\|_p\cdots\|X_n\|_p,$$
as $\|V-U\|_p\to 0$.

We are now ready to state our first main result. Prior to that, we denote by $\text{Sym}_{k}$ the group of permutations of the set $\{1,\ldots,k\}$.

\begin{thm}\label{Frechetdiff}
Let $1<p<\infty$ and $n\in\N$. Let $f\in C^n(\cir)$. Then $f$ is $n$ times continuously Fr\'echet $\Sp^{p}$-differentiable at every $U\in\mathcal{U}(\hilh)$ and for every $1\le k\le n$, $X_1,\ldots,X_k\in\Sp^{p}(\hilh)$,
\begin{equation}\label{formdiff1}
D^{k}_{F,p}f(U)(X_1,\ldots,X_k) = \sum_{\sigma \in \text{\normalfont Sym}_k} \left[\Gamma^{(U)^{k+1}}(f^{[k]})\right](X_{\sigma(1)},\ldots,X_{\sigma(k)}).
\end{equation}
\end{thm}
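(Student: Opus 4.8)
\emph{Strategy.} I would argue by induction on $k$, proving at each stage $1\le k\le n$ that $f$ is $k$ times Fr\'echet $\Sp^p$-differentiable at every $U\in\mathcal{U}(\hilh)$, that \eqref{formdiff1} holds, and that $D^k_{F,p}f$ is continuous. That each right-hand side of \eqref{formdiff1} is a bounded $k$-linear map on $\Sp^p(\hilh)$ is automatic from \eqref{MOIBound2} in Theorem \ref{MOISpBound} together with $\|\cdot\|_{p_i}\le\|\cdot\|_p$. For the base case $k=1$ I would start from the first-order perturbation formula \eqref{PerturbationFormula1}: writing $V=e^{iA}U$ with $A\in\Sp^p_{sa}(\hilh)$,
\begin{align*}
f(V)-f(U)-\bigl[\Gamma^{(U)^2}(f^{[1]})\bigr](V-U)=\bigl[\Gamma^{V,U}(f^{[1]})-\Gamma^{(U)^2}(f^{[1]})\bigr](V-U),
\end{align*}
and show the right-hand side is $o(\|V-U\|_p)$ by the error estimate described below.

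\emph{Inductive step.} Assume \eqref{formdiff1} at order $k-1$ holds at every unitary. Using the inductive formula at $V$ and $U$ and then Lemma \ref{PerturbationFormula3} (with the roles of $U$ and $V$ interchanged), the increment of the $(k-1)$th derivative becomes
\begin{align*}
&\bigl(D^{k-1}_{F,p}f(V)-D^{k-1}_{F,p}f(U)\bigr)(X_1,\ldots,X_{k-1})\\
&=\sum_{\sigma\in\mathrm{Sym}_{k-1}}\sum_{i=1}^{k}\bigl[\Gamma^{(V)^i,(U)^{k-i+1}}(f^{[k]})\bigr](X_{\sigma(1)},\ldots,X_{\sigma(i-1)},V-U,X_{\sigma(i)},\ldots,X_{\sigma(k-1)}).
\end{align*}
Replacing every mixed symbol $\Gamma^{(V)^i,(U)^{k-i+1}}(f^{[k]})$ by $\Gamma^{(U)^{k+1}}(f^{[k]})$ singles out a main term. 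The assignment sending $(\sigma,i)$ to the permutation that places the entry $V-U$ in position $i$ and keeps the order of the $X_{\sigma(j)}$ is a bijection from $\mathrm{Sym}_{k-1}\times\{1,\ldots,k\}$ onto $\mathrm{Sym}_k$ (both of cardinality $k!$); hence the main term is exactly $D^k_{F,p}f(U)(X_1,\ldots,X_{k-1},V-U)$ as defined by \eqref{formdiff1}, which is the $k$-linear candidate that the defining relation of Definition \ref{Frechetderivativedef} requires.

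\emph{The error term (the crux).} It remains to bound, for each $(\sigma,i)$, the quantity $\bigl[\Gamma^{(V)^i,(U)^{k-i+1}}(f^{[k]})-\Gamma^{(U)^{k+1}}(f^{[k]})\bigr]$ evaluated at the tuple containing the single factor $V-U$, and to show it is $o(\|V-U\|_p)\prod_j\|X_j\|_p$. Telescoping in the $i$ copies of $V$ and applying Proposition \ref{PerturbationFormula2} once per copy raises $f^{[k]}$ to $f^{[k+1]}$ and inserts a \emph{second} factor $V-U$; Theorem \ref{MOISpBound} with Hölder exponents $p_1=\cdots=p_{k+1}=p(k+1)$ then yields the quadratic bound $\lesssim\|f^{(k+1)}\|_\infty\|V-U\|_p^2\prod_j\|X_j\|_p$. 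This, however, needs $f\in C^{k+1}$, which fails at the top order $k=n$. I would circumvent this with a two-parameter approximation: with $\varphi_m=f\ast F_m$ the Fej\'er means (trigonometric polynomials, hence smooth), the quadratic estimate applies to $\varphi_m$ with an $m$-dependent constant $C_m\propto\|\varphi_m^{(k+1)}\|_\infty$, while for $f-\varphi_m$ the difference of the two multiple operator integrals has $\mathcal{B}_k(\Sp^p)$-norm at most $2c_{p,k}\|f^{(k)}-\varphi_m^{(k)}\|_\infty$ by \eqref{MOIBound2}, uniformly in $V$. Dividing the total error by $\|V-U\|_p\prod_j\|X_j\|_p$ gives the bound $2c_{p,k}\|f^{(k)}-\varphi_m^{(k)}\|_\infty+C_m\|V-U\|_p$; choosing $m$ large and then $\|V-U\|_p$ small forces this to $0$, which is the required $o(\|V-U\|_p)$.

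\emph{Continuity.} The continuity of $D^k_{F,p}f$ follows from the same device applied to $\bigl[\Gamma^{(V)^{k+1}}(f^{[k]})-\Gamma^{(U)^{k+1}}(f^{[k]})\bigr](X_1,\ldots,X_k)$: for the smooth part a single use of Proposition \ref{PerturbationFormula2} inserts one factor $V-U$ and gives a bound $\lesssim\|\varphi_m^{(k+1)}\|_\infty\|V-U\|_p\prod_j\|X_j\|_p\to0$, while the $f-\varphi_m$ part is again uniformly small in $\mathcal{B}_k(\Sp^p)$-norm; since $\|\cdot\|\le\|\cdot\|_p$, this controls the operator norm required by the definition of continuous Fr\'echet differentiability. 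The main obstacle throughout is precisely the absence of one extra derivative at the top order, and the essential idea is to trade the quadratic estimate for smooth approximants against the uniform operator-integral bound of Theorem \ref{MOISpBound} for the tail $f-\varphi_m$.
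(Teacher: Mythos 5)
Your proposal is correct, and its skeleton coincides with the paper's: the same induction, the same use of Lemma \ref{PerturbationFormula3} to rewrite the increment of $D^{k-1}_{F,p}f$, and the same bijection $\mathrm{Sym}_{k-1}\times\{1,\ldots,k\}\simeq\mathrm{Sym}_k$ to identify the main term as the candidate derivative \eqref{formdiff1}. Where you genuinely diverge is the crux estimate on the error. The paper packages it into a separate result (Lemma \ref{LemmaFrediff}): for fixed $X_j\in\Sp^{np}(\hilh)$, the difference $\bigl[\Gamma^{V_1,\ldots,V_{n+1}}(f^{[n]})-\Gamma^{U_1,\ldots,U_{n+1}}(f^{[n]})\bigr](X_1,\ldots,X_n)$ is small in $\Sp^p$ as soon as the unitaries are close \emph{in operator norm}; this is proved by telescoping monomial symbols algebraically (giving a bound linear in $\max_k\|V_k-U_k\|$) and running the Fej\'er/uniform-bound splitting once, inside the lemma. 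The theorem's proof then just invokes it, using $\|V-U\|\le\|V-U\|_p$, and the same lemma also delivers the continuity statement (and Corollary \ref{ContinuityCorollary} later). You instead telescope the mixed symbols $\Gamma^{(V)^i,(U)^{k-i+1}}(f^{[k]})-\Gamma^{(U)^{k+1}}(f^{[k]})$ through Proposition \ref{PerturbationFormula2}, which inserts a second factor $V-U$ and, via Theorem \ref{MOISpBound} with exponents $p_j=(k+1)p$, gives a quadratic bound $\lesssim\|f^{(k+1)}\|_\infty\|V-U\|_p^2\prod_j\|X_j\|_p$; since this needs one extra derivative, you perform the Fej\'er splitting at the level of the theorem itself, with the two-parameter ordering (first $m$ large using \eqref{MOIBound2} for the tail $f-\varphi_m$, then $\|V-U\|_p$ small), which you spell out correctly. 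Both routes rest on the same two pillars (uniform convergence $\|\varphi_m^{(k)}-f^{(k)}\|_\infty\to0$ and the uniform bound of Theorem \ref{MOISpBound}); yours buys an explicit quadratic rate for smooth symbols and avoids the separate monomial computation of Lemma \ref{LemmaFrediff} by recycling the perturbation formula already proved, while the paper's lemma buys a statement under the weaker hypothesis of operator-norm closeness that can be reused verbatim for the continuity assertions elsewhere in the article.
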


\begin{rmrk}
The converse of the above Theorem \ref{Frechetdiff} holds true, namely, if $f$ is $n$ times continuously Fr\'echet $\Sp^{p}$-differentiable at every $U\in\mathcal{U}(\hilh)$, then $f\in C^n(\cir)$. This follows from similar computations as in {\normalfont\cite[Proposition 3.9 (ii)]{{LeSk20}}}.
\end{rmrk}

We now establish the $n$th order G\^ateaux derivative of $f:\cir\to\C$. The subsequent result shows that for $f\in C^{n}(\cir)$, the operator function $\R\ni t\mapsto f(U(t))-f(U(0))\in\Sp^p(\hilh)$ is $n$ times differentiable (resp. continuously differentiable) in $\Sp^{p}(\hilh)$ if $U(t)-U(0)\in\Sp^p(\hilh)$ belongs to $D^n(\R,\Sp^p(\hilh))$ (resp. $C^n(\R,\Sp^p(\hilh))$). Since we are working on the path $t\mapsto U(t)$ (which may not always be linear), the expression of the $n$th order G\^ateaux derivative of $f$ will be bizarre to obtain from the definition of the Fr\'echet derivative directly unlike in the linear path. Here, we provide an easy-to-get expression of the G\^ateaux derivative using Lemma \ref{DifferentiationLemma}. Furthermore, we express the $n$th order operator Taylor remainder corresponding to the unitary operator $U(t)$ as multiple operator integrals when $f\in C^{n}(\cir)$.

\begin{thm}\label{MainTheorem1}
Let $1<p<\infty$ and $n\in\N$. Let $U:\R\rightarrow\mathcal{U}(\hilh)$ be such that the function $\tilde{U}:t\in\R\mapsto U(t)-U(0)\in\Sp^p(\hilh)$ belongs to $D^n(\R,\Sp^p(\hilh))$. Let $f\in C^n(\cir)$. We consider the function
$$\varphi:t\in\R\mapsto f(U(t))-f(U(0))\in\Sp^p(\hilh).$$
Then the following assertions hold.
\begin{enumerate}[{\normalfont(i)}]
\item The function $\varphi\in D^n(\R,\Sp^{p}(\hilh))$ and for every integer $1\le k\le n$ and $t\in\R$,
\begin{align}
\label{DerivativeFormula1}&\varphi^{(k)}(t)=\sum_{m=1}^{k} \sum_{\substack{l_1,\ldots,l_m\ge 1 \\ l_1+\cdots+l_m=k}}\dfrac{k!}{l_1!\cdots l_m!}\left[\Gamma^{(U(t))^{m+1}}(f^{[m]})\right]\left(\tilde{U}^{(l_1)}(t),\ldots,\tilde{U}^{(l_m)}(t)\right).
\end{align}
In addition, if $\tilde{U}\in C^{n}(\R,\Sp^{p}(\hilh))$, then $\varphi\in C^{n}(\R,\Sp^{p}(\hilh))$.\vspace*{0.1cm}
\item The operator Taylor remainder defined by 
\begin{align}
\label{ref20}R_{n,f,U}(t):=f(U(t))-f(U(0))-\sum_{k=1}^{n-1} \dfrac{1}{k!}\varphi^{(k)}(0)
\end{align} 
satisfies, for any $t\in\R$,
\begin{equation}\label{TaylorFormula1}
R_{n,f,U}(t)=\sum_{m=1}^n\sum_{\substack{l_1,\ldots,l_m \ge 1 \\ l_1+\cdots+l_m=n}}\left[\Gamma^{U(t),(U(0))^{m}}(f^{[m]})\right]\left(R_{l_1,U}(t),\dfrac{\tilde{U}^{(l_2)}(0)}{l_2!},\ldots,\dfrac{\tilde{U}^{(l_m)}(0)}{l_m!}\right),
\end{equation}
where $R_{1,U}(t):=\tilde{U}(t)$ and for any $l_1\ge 1$,
\begin{equation*}
R_{l_{1},U}(t):=\tilde{U}(t)-\sum_{k=1}^{l_{1}-1} \dfrac{1}{k!}\tilde{U}^{(k)}(0).
\end{equation*}
\end{enumerate}
\end{thm}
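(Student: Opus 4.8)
The plan is to prove (i) by induction on the order $k$, the engine being the auxiliary differentiation Lemma \ref{DifferentiationLemma}. That lemma should compute the $\Sp^p$-derivative of a generic block
$$t\mapsto\left[\Gamma^{(U(t))^{m+1}}(f^{[m]})\right](G_1(t),\ldots,G_m(t)),$$
where $G_1,\ldots,G_m\in D^1(\R,\Sp^p(\hilh))$ and $f\in C^{m+1}(\cir)$, and I expect it to take the form
\begin{align*}
\frac{d}{dt}\left[\Gamma^{(U(t))^{m+1}}(f^{[m]})\right](G_1(t),\ldots,G_m(t))&=\sum_{j=1}^{m}\left[\Gamma^{(U(t))^{m+1}}(f^{[m]})\right](G_1(t),\ldots,G_j'(t),\ldots,G_m(t))\\
&+\sum_{i=1}^{m+1}\left[\Gamma^{(U(t))^{m+2}}(f^{[m+1]})\right](G_1(t),\ldots,G_{i-1}(t),\tilde{U}'(t),G_i(t),\ldots,G_m(t)).
\end{align*}
The first sum is the product rule for the $m$-linear map with fixed integrating unitaries; the second, carrying the information of the moving unitary, is where the perturbation formula enters.

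To prove the lemma I would form the difference quotient $\tfrac1s(\Psi(t+s)-\Psi(t))$ and telescope, separating the change of the arguments $G_j$ from the change of the integrating unitary. For the argument part, $\tfrac{G_j(t+s)-G_j(t)}{s}\to G_j'(t)$ in $\Sp^p$, and the uniform $\Sp^p$-boundedness furnished by Theorem \ref{MOISpBound} controls all cross terms, yielding the product-rule sum. For the unitary part I would apply Lemma \ref{PerturbationFormula3} with $U=U(t+s)$, $V=U(t)$ and $k=n=m+1$, rewriting the difference of the blocks with integrating unitaries $(U(t+s))^{m+1}$ and $(U(t))^{m+1}$ as $\sum_{i=1}^{m+1}\left[\Gamma^{(U(t+s))^i,(U(t))^{m+2-i}}(f^{[m+1]})\right](\ldots,U(t+s)-U(t),\ldots)$; upon dividing by $s$ the inserted slot $\tfrac{U(t+s)-U(t)}{s}=\tfrac{\tilde{U}(t+s)-\tilde{U}(t)}{s}\to\tilde{U}'(t)$ while the integrating unitaries converge to $(U(t))^{m+2}$, so Proposition \ref{ApproxResult} together with Theorem \ref{MOISpBound} let me pass to the limit. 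The control of the remaining cross terms is an $\varepsilon$-argument of exactly the kind appearing at the end of the proof of Proposition \ref{PerturbationFormula2}. This differentiation lemma, and the interchange of limits it requires, is the main obstacle in part (i).

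With the lemma in hand the induction is routine. The base case $k=1$ is \eqref{PerturbationFormula1}: $\tfrac{\varphi(t+s)-\varphi(t)}{s}=\left[\Gamma^{U(t+s),U(t)}(f^{[1]})\right]\!\left(\tfrac{\tilde{U}(t+s)-\tilde{U}(t)}{s}\right)\to\left[\Gamma^{(U(t))^2}(f^{[1]})\right](\tilde{U}'(t))$, which is \eqref{DerivativeFormula1} at $k=1$. For the step, I differentiate each summand of $\varphi^{(k)}$ via the lemma: the product-rule sum sends a composition $(l_1,\ldots,l_m)$ of $k$ to those obtained by raising one part, and the unitary sum inserts a new part equal to $1$; both keep the coefficient $\tfrac{k!}{l_1!\cdots l_m!}$. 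Collecting, for a fixed composition $(\mu_1,\ldots,\mu_{m'})$ of $k+1$ the two contributions combine with total weight
$$\frac{k!}{\mu_1!\cdots\mu_{m'}!}\Big(\sum_{j:\,\mu_j\ge2}\mu_j+\#\{j:\mu_j=1\}\Big)=\frac{k!}{\mu_1!\cdots\mu_{m'}!}\sum_{j}\mu_j=\frac{(k+1)!}{\mu_1!\cdots\mu_{m'}!},$$
which is exactly the coefficient in \eqref{DerivativeFormula1} at order $k+1$. For continuity, when $\tilde{U}\in C^n(\R,\Sp^p(\hilh))$ each $\tilde{U}^{(l_j)}$ is $\Sp^p$-continuous and $U(t)\to U(t_0)$ in $\Sp^p$ (hence in operator norm), so Proposition \ref{ApproxResult}, Theorem \ref{MOISpBound} and a short telescoping give continuity of every summand of $\varphi^{(n)}$.

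Part (ii) is purely algebraic. I would substitute the formula \eqref{DerivativeFormula1} for $\varphi^{(k)}(0)$ into $R_{n,f,U}(t)=\varphi(t)-\sum_{k=1}^{n-1}\tfrac1{k!}\varphi^{(k)}(0)$ and transform the claimed right-hand side of \eqref{TaylorFormula1}. Writing $R_{l_1,U}(t)=\tilde{U}(t)-\sum_{k=1}^{l_1-1}\tfrac{1}{k!}\tilde{U}^{(k)}(0)$, the first slot splits into a term carrying $\tilde{U}(t)=U(t)-U(0)$ and the lower-order Taylor coefficients. To the former I apply Proposition \ref{PerturbationFormula2} in the first slot,
$$\left[\Gamma^{U(t),(U(0))^m}(f^{[m]})\right](\tilde{U}(t),K_1,\ldots,K_{m-1})=\left[\Gamma^{U(t),(U(0))^{m-1}}(f^{[m-1]})-\Gamma^{(U(0))^m}(f^{[m-1]})\right](K_1,\ldots,K_{m-1}),$$
so that the blocks with $U(t)$ in the first slot telescope down (the $m=1$ term being $\varphi(t)$ by \eqref{PerturbationFormula1}), while the all-$U(0)$ blocks reassemble, through \eqref{DerivativeFormula1}, into $-\sum_{k=1}^{n-1}\tfrac1{k!}\varphi^{(k)}(0)$. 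The base case $n=1$ is immediate from \eqref{PerturbationFormula1}; I would run the reduction by induction on $n$, the only delicate point being the bookkeeping of the compositions $l_1+\cdots+l_m=n$, which is the main obstacle for this part.
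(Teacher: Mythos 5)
Your proposal is correct and follows essentially the same route as the paper: part (i) is an induction on $k$ powered by Lemma \ref{DifferentiationLemma} (whose proof you sketch exactly as the paper does, via Lemma \ref{PerturbationFormula3}, Proposition \ref{ApproxResult}, Theorem \ref{MOISpBound} and an $\epsilon$-argument), and part (ii) is an algebraic induction using Proposition \ref{PerturbationFormula2}, which is what the paper delegates to \cite[Theorem 5.4.1(ii)]{SkToBook}. The only cosmetic differences are that the paper handles the top term of $\varphi^{(k)}$ (all $l_i=1$) through the Fr\'echet differentiability of Theorem \ref{Frechetdiff} rather than through the lemma, and that it cites \cite[Theorem 5.3.4]{SkToBook} for the composition-counting identity that you verify explicitly (and correctly).
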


Finally, the differentiability result stated below is the $\Sp^{p}$-analogue of \cite[Theorem 3.1]{PoSkSu16}. Note that, Theorem 3.1 in \cite{PoSkSu16} establishes the existence of the $n$th order G\^ateaux derivative (in operator norm) of $f$ under the assumption of $f\in B^{n}_{\infty 1}(\cir)$. We prove here that $f$ is actually $n$ times continuously G\^ateaux differentiable (in Schatten $p$-norm) under the assumption that $f\in C^{n}(\cir)$. In addition, we also express the $n$th order Taylor remainder as a sum of multiple operator integrals and deduce an $\Sp^{p/n}$-estimate in the case when $f\in C^{n}(\cir)$ for $1<n<p<\infty$. As discussed in the introduction, this result is already established in \cite[Theorem 4.2]{PoSkSuTo17} under the same assumption, here we give a simple proof of it without using the approximation of $f$ or the integral representation for the remainder of the Taylor approximation, as found in the aforementioned paper.

\begin{crl}\label{MainCorollary1}
Let $1<p<\infty$ and $n\in\N$. Assume that $A\in\Sp^{p}_{sa}(\hilh)$ and $U\in\mathcal{U}(\hilh)$. Let $f\in C^n(\cir)$. Let $U(t)=e^{itA}U, t\in\R$ and consider the function
$$\varphi:t\in\R\mapsto f(e^{itA}U)-f(U)\in\Sp^{p}(\hilh).$$
\begin{enumerate}[{\normalfont(i)}]
\item The function $\varphi\in C^n(\R,\Sp^p(\hilh))$ and for every integer $1\le k\le n$ and $t\in\R$,
\begin{equation}\label{DerivativeFormula2}
\varphi^{(k)}(t)=i^{k}\sum_{m=1}^{k}\sum_{\substack{l_1,\ldots,l_m\ge 1 \\ l_1+\cdots+l_m=k}}\dfrac{k!}{l_1!\cdots l_m!}\left[\Gamma^{(U(t))^{m+1}}(f^{[m]})\right]\left(A^{l_1}U(t),\ldots,A^{l_m}U(t)\right).
\end{equation}
Moreover, there exists a constant $\tilde{c}_{p,n}>0$ such that
\begin{align}\label{DerivativeBound}
\left\|\varphi^{(n)}(t)\right\|_{p}\le \tilde{c}_{p,n}\sum_{m=1}^{n}\left\|f^{(m)}\right\|_{\infty}\|A\|_{p}^{n}.
\end{align}
\item The operator Taylor remainder defined by
\begin{align}
\label{ref21}R_{n,f}(A,U):=f(e^{iA}U)-f(U) - \sum_{k=1}^{n-1} \dfrac{1}{k!} \varphi^{(k)}(0)
\end{align}
satisfies
\begin{equation}\label{TaylorFormula2}
R_{n, f}(A,U)=\sum_{m=1}^n\sum_{\substack{l_1,\ldots, l_m\ge 1 \\ l_1+\cdots+l_m=n}}\left[\Gamma^{e^{iA}U,(U)^{m}}(f^{[m]})\right] \left(\sum_{k=l_1}^{\infty}\dfrac{(iA)^k}{k!}U,\dfrac{(iA)^{l_2}}{l_2!}U,\ldots,\dfrac{(iA)^{l_m}}{l_m!}U\right).
\end{equation}
Moreover, if $1<n<p<\infty$, then there exists a constant $\tilde{c}_{p,n}>0$ such that
\begin{align}\label{TaylorEstimate}
\|R_{n, f}(A,U)\|_{\frac{p}{n}}\le \tilde{c}_{p,n}\sum_{m=1}^{n}\left\|f^{(m)}\right\|_{\infty}\|A\|_{p}^{n}.
\end{align}
\end{enumerate}
\end{crl}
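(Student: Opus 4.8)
The plan is to deduce everything from Theorem~\ref{MainTheorem1} applied to the particular path $U(t)=e^{itA}U$, so the first task is to record the regularity of $\tilde U(t)=(e^{itA}-I)U$ and to compute its derivatives. Expanding $e^{itA}-I=\sum_{k\ge 1}\frac{(itA)^{k}}{k!}$ and using $\|A^{k}\|_{p}\le\|A\|^{k-1}\|A\|_{p}$, the series and each of its formal term-by-term derivatives converge uniformly on compact $t$-intervals in $\Sp^{p}(\hilh)$; hence $\tilde U\in C^{\infty}(\R,\Sp^{p}(\hilh))$ with
\begin{equation*}
\tilde U^{(l)}(t)=i^{l}A^{l}e^{itA}U=i^{l}A^{l}U(t),\qquad l\ge 1,
\end{equation*}
each derivative lying in $\Sp^{p/l}(\hilh)\subseteq\Sp^{p}(\hilh)$. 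In particular $\tilde U\in C^{n}(\R,\Sp^{p}(\hilh))$, so Theorem~\ref{MainTheorem1} applies and already gives $\varphi\in C^{n}(\R,\Sp^{p}(\hilh))$.

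For part~(i), I would substitute $\tilde U^{(l_{j})}(t)=i^{l_{j}}A^{l_{j}}U(t)$ into \eqref{DerivativeFormula1}; multilinearity of the operator integral pulls out the scalar $\prod_{j}i^{l_{j}}=i^{l_{1}+\cdots+l_{m}}=i^{k}$, which is precisely \eqref{DerivativeFormula2}. For \eqref{DerivativeBound} I would invoke the diagonal bound \eqref{MOIBound2}: since $\Gamma^{(U(t))^{m+1}}(f^{[m]})\in\mathcal{B}_{m}(\Sp^{p}(\hilh))$ has norm at most $c_{p,m}\|f^{(m)}\|_{\infty}$, and since $\|A^{l_{j}}U(t)\|_{p}\le\|A^{l_{j}}\|_{p}=\|A\|_{l_{j}p}^{l_{j}}\le\|A\|_{p}^{l_{j}}$, every summand of $\varphi^{(n)}(t)$ is bounded by $\frac{n!}{l_{1}!\cdots l_{m}!}\,c_{p,m}\|f^{(m)}\|_{\infty}\|A\|_{p}^{n}$; summing over the compositions and over $m$ absorbs the combinatorial factors into a single constant $\tilde c_{p,n}$.

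For part~(ii), the remainder formula \eqref{TaylorFormula2} is just \eqref{TaylorFormula1} evaluated at $t=1$, where $U(1)=e^{iA}U$, $U(0)=U$, $\tilde U^{(l_{j})}(0)/l_{j}!=(iA)^{l_{j}}U/l_{j}!$, and
\begin{equation*}
R_{l_{1},U}(1)=\tilde U(1)-\sum_{k=1}^{l_{1}-1}\frac{(iA)^{k}}{k!}U=\sum_{k=l_{1}}^{\infty}\frac{(iA)^{k}}{k!}U,
\end{equation*}
which is exactly the first argument displayed in \eqref{TaylorFormula2}.

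The only genuinely new work is the $\Sp^{p/n}$-estimate \eqref{TaylorEstimate}, and I expect the main obstacle to lie here. The naive bound $\big\|\sum_{k\ge l_{1}}\frac{(iA)^{k}}{k!}U\big\|_{p/l_{1}}\le\sum_{k\ge l_{1}}\frac{\|A\|_{p}^{k}}{k!}$ produces an uncontrolled factor $e^{\|A\|_{p}}$, which is fatal since $\tilde c_{p,n}$ must be independent of $A$. The remedy is to treat the tail as a functional-calculus object: as $A=A^{\ast}$, setting $h_{l_{1}}(z)=e^{iz}-\sum_{k=0}^{l_{1}-1}\frac{(iz)^{k}}{k!}$ gives $R_{l_{1},U}(1)=h_{l_{1}}(A)U$, and the integral form of Taylor's theorem yields the pointwise bound $|h_{l_{1}}(z)|\le\frac{|z|^{l_{1}}}{l_{1}!}$ for real $z$. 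Monotonicity of the functional calculus then gives $|h_{l_{1}}(A)|\le\frac{1}{l_{1}!}|A|^{l_{1}}$, whence
\begin{equation*}
\|R_{l_{1},U}(1)\|_{p/l_{1}}\le\frac{1}{l_{1}!}\big\||A|^{l_{1}}\big\|_{p/l_{1}}=\frac{1}{l_{1}!}\|A\|_{p}^{l_{1}},
\end{equation*}
and likewise $\|(iA)^{l_{j}}U/l_{j}!\|_{p/l_{j}}=\frac{1}{l_{j}!}\|A\|_{p}^{l_{j}}$. Choosing $p_{j}:=p/l_{j}$ one checks $\sum_{j}\frac{1}{p_{j}}=\frac{l_{1}+\cdots+l_{m}}{p}=\frac{n}{p}=\frac{1}{p/n}$ with each $p_{j}>1$ (since $l_{j}\le n<p$), so the mixed-norm bound \eqref{MOIBound1} of Theorem~\ref{MOISpBound} applies with target exponent $p/n>1$ and constant depending only on $p/n$ and $m$, hence only on $p$ and $n$. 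Each term of \eqref{TaylorFormula2} is then at most $c_{p/n,m}\|f^{(m)}\|_{\infty}\prod_{j}\frac{1}{l_{j}!}\|A\|_{p}^{l_{j}}=c_{p/n,m}\|f^{(m)}\|_{\infty}\frac{\|A\|_{p}^{n}}{l_{1}!\cdots l_{m}!}$, and summing over compositions and $m$ yields \eqref{TaylorEstimate}. The one remaining subtlety is the consistency of the two readings of $\Gamma^{e^{iA}U,(U)^{m}}(f^{[m]})$—as an element of $\mathcal{B}_{m}(\Sp^{p})$ in the formula and of $\mathcal{B}_{m}(\Sp^{p_{1}}\times\cdots\times\Sp^{p_{m}},\Sp^{p/n})$ in the estimate—which holds because all the arguments lie in every relevant Schatten class and both maps extend the same map on finite-rank operators.
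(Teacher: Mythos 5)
Your proposal is correct, and its overall skeleton coincides with the paper's proof: both reduce to Theorem \ref{MainTheorem1} applied to the path $U(t)=e^{itA}U$, compute $\tilde{U}^{(l)}(t)=(iA)^{l}U(t)$, pull scalars out by multilinearity to get \eqref{DerivativeFormula2}, obtain \eqref{TaylorFormula2} by evaluating \eqref{TaylorFormula1} at $t=1$, and then feed the mixed-exponent bound \eqref{MOIBound1} of Theorem \ref{MOISpBound} (with $p_j=p/l_j$, all admissible since $l_j\le n<p$) into the remainder formula.

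The one place where you genuinely diverge is the key tail estimate $\bigl\|\sum_{k\ge l_1}\tfrac{(iA)^k}{k!}U\bigr\|_{p/l_1}\le\tfrac{1}{l_1!}\|A\|_p^{l_1}$, which you correctly identify as the crux (the naive term-by-term bound indeed produces a fatal factor $e^{\|A\|_p}$). The paper derives it from the integral form of the Taylor remainder for the $\Sp^{p}$-valued function $\tilde{U}$, namely $\tilde{U}(1)-\sum_{k=1}^{l_1-1}\tfrac{1}{k!}\tilde{U}^{(k)}(0)=\tfrac{1}{(l_1-1)!}\int_0^1(1-t)^{l_1-1}\tilde{U}^{(l_1)}(t)\,dt$ (quoting \cite[(4.2)]{PoSkSu16}), and then estimates the Bochner integral using $\|\tilde U^{(l_1)}(t)\|_{p/l_1}=\|A^{l_1}\|_{p/l_1}=\|A\|_p^{l_1}$. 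You instead write the tail as $h_{l_1}(A)U$ with $h_{l_1}(z)=e^{iz}-\sum_{k=0}^{l_1-1}\tfrac{(iz)^k}{k!}$, use the scalar bound $|h_{l_1}(z)|\le|z|^{l_1}/l_1!$ on $\R$, and conclude via $|h_{l_1}(A)|=|h_{l_1}|(A)\le\tfrac{1}{l_1!}|A|^{l_1}$ and monotonicity of Schatten (quasi-)norms under the operator order on positive operators. Both are valid and give the same constant; your route is more elementary and self-contained (no vector-valued integration, no citation needed, and it works verbatim for the quasi-normed range $p/l_1<1$ where one might worry about triangle inequalities for Bochner integrals), while the paper's route has the advantage of being path-theoretic and hence closer in spirit to the general framework of Theorem \ref{MainTheorem1}, where the remainder $R_{l_1,U}(t)$ need not be a function of a single self-adjoint operator. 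Your closing remark on the consistency of the two readings of $\Gamma^{e^{iA}U,(U)^m}(f^{[m]})$ (diagonal versus mixed exponents) addresses a point the paper passes over silently, and your resolution — agreement of both extensions on the dense intersection together with compatibility of the limits — is the right one.
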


\subsection{Some auxiliary lemmas}\label{Subsec4}\hfill\\

In this section, we will provide some technical results that will be used to prove our main results in the next section.

\begin{lma}\label{LemmaFrediff}
Let $1<p<\infty$ and $n\in\N$. Let $U_1,\ldots,U_{n+1},V_1,\ldots,V_{n+1}\in\mathcal{U}(\hilh)$ and $X_1, \ldots, X_n \in \Sp^{np}(\hilh)$. Let $f \in C^n(\cir)$. Then, for every $\epsilon>0$, there exists a constant $\delta>0$ such that, if $\|V_i-U_i\|\le\delta$ for every $1\leq i\le n+1$, then
$$\left\| \left[ \Gamma^{V_1,\ldots, V_{n+1}}(f^{[n]})-\Gamma^{U_1,\ldots, U_{n+1}}(f^{[n]}) \right](X_1,\ldots,X_n) \right\|_p \leq \epsilon \|X_1\|_{np} \cdots\|X_n\|_{np}.$$
\end{lma}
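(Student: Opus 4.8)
The plan is to establish the estimate by a continuity argument that exploits the multilinearity of the multiple operator integral together with the boundedness provided by Theorem \ref{MOISpBound}. The key idea is that $\Gamma^{V_1,\ldots,V_{n+1}}(f^{[n]})$ depends continuously on the tuple $(V_1,\ldots,V_{n+1})$ when the $V_i$ are close to the $U_i$ in operator norm, and that the operator-norm convergence $V_i \to U_i$ allows us to pass through the $\Sp^p$-bounds uniformly.

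First I would reduce to the Hilbert--Schmidt case. The natural strategy is to write the difference telescopically: since $\Gamma^{V_1,\ldots,V_{n+1}}(f^{[n]}) - \Gamma^{U_1,\ldots,U_{n+1}}(f^{[n]})$ can be decomposed by changing one $U_i$ into $V_i$ at a time, I would express it as a sum of $n+1$ terms, each of which differs in only a single slot. Each such single-slot difference can be treated via the perturbation formula of Proposition \ref{PerturbationFormula2}, which rewrites
$$\left[\Gamma^{U_1,\ldots,U_{i-1},V_i,U_{i+1},\ldots}(f^{[n]}) - \Gamma^{U_1,\ldots,U_{i-1},U_i,U_{i+1},\ldots}(f^{[n]})\right](X_1,\ldots,X_n)$$
as a multiple operator integral $\Gamma^{\ldots,V_i,U_i,\ldots}(f^{[n+1]})$ evaluated with the extra perturbation entry $V_i - U_i$ inserted. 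Here I would need to be slightly careful: Proposition \ref{PerturbationFormula2} as stated concerns differences with $U-V\in\Sp^p$, and since $V_i-U_i$ is only controlled in operator norm (not Schatten norm) we cannot directly invoke it for general $X_j$. Instead, the cleaner route is to argue by approximation in $\Sp^2$: first establish the estimate when $X_1,\ldots,X_n\in\Sp^2(\hilh)$, using Proposition \ref{ApproxResult} (which gives $\Sp^2$-continuity of $\Gamma^{V_1,\ldots,V_{n+1}}(\varphi)$ in the $V_i$ under SOT convergence, hence in particular under operator-norm convergence), and then extend to $\Sp^{np}$ by density and the uniform $\Sp^p$-boundedness furnished by Theorem \ref{MOISpBound}.

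Concretely, the argument proceeds as follows. Fix $X_1,\ldots,X_n\in\Sp^{np}(\hilh)$ with unit norms and let $\epsilon>0$. By Theorem \ref{MOISpBound} (applied with $p_1=\cdots=p_n=np$, so that $\tfrac1p=\sum\tfrac{1}{np}$), the family of operators $\Gamma^{V_1,\ldots,V_{n+1}}(f^{[n]})$ is uniformly bounded in $\mathcal{B}_n(\Sp^{np}\times\cdots\times\Sp^{np},\Sp^p)$ by $c_{p,n}\|f^{(n)}\|_\infty$, independently of the unitaries. Using this uniform bound, I would choose Hilbert--Schmidt operators $Y_j\in\Sp^2(\hilh)$ with $\|X_j - Y_j\|_{np}$ small; multilinearity then lets me replace each $X_j$ by $Y_j$ at the cost of an error controlled by $c_{p,n}\|f^{(n)}\|_\infty$ times the approximation errors, uniformly in the unitaries $V_i$ and $U_i$. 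This reduces the problem to the case of arguments in $\Sp^2$.

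For the Hilbert--Schmidt arguments, Proposition \ref{ApproxResult} does the work: for fixed $Y_1,\ldots,Y_n\in\Sp^2(\hilh)$ and fixed $\varphi=f^{[n]}\in C(\cir^{n+1})$, whenever $V_i\to U_i$ in operator norm (which implies SOT convergence) we have
$$\left\|\left[\Gamma^{V_1,\ldots,V_{n+1}}(f^{[n]}) - \Gamma^{U_1,\ldots,U_{n+1}}(f^{[n]})\right](Y_1,\ldots,Y_n)\right\|_2 \to 0,$$
and since $\|\cdot\|_p\le\|\cdot\|_2$ on $\Sp^2(\hilh)$ for $p\ge 2$ (and for $1<p<2$ one first works in $\Sp^2\subset\Sp^p$), this gives smallness of the $\Sp^p$-norm as well. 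The main obstacle I anticipate is the interplay between the two different norms: Proposition \ref{ApproxResult} yields only $\Sp^2$-continuity and only a sequential (SOT) statement, whereas the target estimate is a uniform, quantitative $(\epsilon,\delta)$ bound in the operator norm on the unitaries and in the $\Sp^{np}$-norm on the arguments. Bridging this gap requires the density-plus-uniform-boundedness maneuver of the previous paragraph, and some care to ensure the $Y_j$ (and hence the SOT-continuity input) are chosen before the $\delta$ controlling $\|V_i-U_i\|$, so that the two error contributions can be combined into a single $\epsilon$. Assembling these pieces—the uniform bound, the $\Sp^2$-reduction, and the approximation lemma—then yields the claimed estimate.
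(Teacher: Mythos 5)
Your strategy—reduce to Hilbert--Schmidt arguments by density plus the uniform bound of Theorem \ref{MOISpBound}, then invoke the SOT-continuity of Proposition \ref{ApproxResult}—has a genuine gap, and it occurs exactly where the lemma has real content. Your passage from $\Sp^2$-smallness to $\Sp^p$-smallness fails for $1<p<2$: in that range the inclusion goes the other way, $\Sp^p(\hilh)\subset\Sp^2(\hilh)$ with $\|\cdot\|_2\le\|\cdot\|_p$, so the conclusion of Proposition \ref{ApproxResult} (convergence in $\|\cdot\|_2$) gives no control at all on $\|\cdot\|_p$. Your parenthetical fix, ``for $1<p<2$ one first works in $\Sp^2\subset\Sp^p$,'' rests on a false inclusion. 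One can repair this—take the approximants $Y_j$ of finite rank and interpolate, $\|A\|_p\le\|A\|_q^{1-\theta}\|A\|_2^{\theta}$ with $q<p<2$, using Theorem \ref{MOISpBound} with exponents $nq$ to get a uniform $\Sp^q$ bound on the relevant operators—but none of this is in your proposal. (A smaller point: your first, abandoned idea of using Proposition \ref{PerturbationFormula2} would also have needed $f\in C^{n+1}(\cir)$, since it writes differences of $\Gamma(f^{[n]})$ as a multiple operator integral with symbol $f^{[n+1]}$; the Schatten-norm obstruction you noticed is not the only one.)

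A second, structural defect: the $\delta$ your argument produces depends on $X_1,\ldots,X_n$ (through the choice of the $Y_j$), whereas the paper's proof yields a $\delta$ depending only on $\epsilon$, $f$, $n$, $p$—uniform over all arguments and even over the unitaries. This uniformity is not cosmetic: in the proof of Theorem \ref{Frechetdiff} the lemma is applied with one argument equal to $V-U$, which varies with $V$, so an argument-dependent $\delta$ would make that application circular; the product of norms on the right-hand side of the estimate signals that the uniform reading is the intended one. The paper achieves it by a purely quantitative route that bypasses Proposition \ref{ApproxResult} entirely: approximate $f$ by a Fej\'er polynomial $P$ with $\|f^{(n)}-P^{(n)}\|_\infty$ small, so that Theorem \ref{MOISpBound} controls $\Gamma(f^{[n]}-P^{[n]})$ uniformly in all data; for the monomials of $P$, telescope in the unitaries and use $\|V^r-U^r\|\le |r|\,\|V-U\|$ to get a bound of the form $K\max_k\|V_k-U_k\|\,\|X_1\|_{np}\cdots\|X_n\|_{np}$ with $K$ depending only on $P$. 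That argument works for every $p\in(1,\infty)$ at once and delivers the uniform $\delta$; your proof, even after patching the $p<2$ case, proves only the pointwise-in-$X$ version.
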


\begin{proof}
Fix $\epsilon >0$. We first prove the lemma when $f^{[n]}=Q$ is a trigonometric polynomial. By linearity, it suffices to consider the case $Q(z_1,\ldots,z_{n+1})=z_1^{m_1}\cdots z_{n+1}^{m_{n+1}}$, where $m_i\in\Z, 1\le i\le n+1$. Since
\begin{align*}
&\left[\Gamma^{V_1,\ldots,V_{n+1}}(Q)-\Gamma^{U_1,\ldots,U_{n+1}}(Q)\right](X_1,\ldots,X_n)\\
&=V_1^{m_1}X_1 V_2^{m_2}\cdots V_n^{m_n}X_nV_{n+1}^{m_{n+1}}-U_1^{m_1} X_1 U_2^{m_2}\cdots U_n^{m_n}X_n U_{n+1}^{m_{n+1}}\\
&=\sum_{k=1}^{n+1}\left(V_1^{m_1}\cdots V_{k-1}^{m_{k-1}}X_{k-1}V_k^{m_k}X_k U_{k+1}^{m_{k+1}}\cdots U_{n+1}^{m_{n+1}}-V_1^{m_1}\cdots V_{k-1}^{m_{k-1}}X_{k-1}U_k^{m_k}\cdots U_{n+1}^{m_{n+1}}\right)\\
&=\sum_{k=1}^{n+1}V_1^{m_1}\cdots V_{k-1}^{m_{k-1}}X_{k-1}(V_k^{m_k}-U_k^{m_k})X_k U_{k+1}^{m_{k+1}}\cdots U_{n+1}^{m_{n+1}}
\end{align*}
and $U_i, V_i$ are unitaries, by Theorem \ref{MOISpBound} we get 
\begin{align*}
&\left\|\left[\Gamma^{V_1,\ldots,V_{n+1}}(f^{[n]})-\Gamma^{U_1,\ldots,U_{n+1}}(f^{[n]})\right](X_1,\ldots,X_n)\right\|_{p}\\
&\hspace*{0.5cm}\le(n+1)\max_{1\le k\le n+1}\|V_k^{m_k}-U_k^{m_k}\|\,\|X_1\|_{np} \cdots\|X_n\|_{np}.
\end{align*}
 By simple algebraic manipulations we obtain that for every $r\in\Z$,
$$\|V^{r}-U^{r}\|\le |r|\cdot\|V-U\|.$$
It follows that there exists a constant $K>0$ (depending on $n$ and $m_1,\ldots,m_{n+1}$, that is, only on $Q$) such that
\begin{align*}
\left\|\left[\Gamma^{V_1,\ldots,V_{n+1}}(f^{[n]})-\Gamma^{U_1,\ldots,U_{n+1}}(f^{[n]})\right](X_1,\ldots,X_n)\right\|\le K\max_{1\le k\le n+1}\|V_k-U_k\|\|X_1\|_{np} \cdots\|X_n\|_{np}.
\end{align*}
Choosing $\delta=\frac{\epsilon}{K}$ we conclude the proof for the particular case of a trigonometric polynomial.
		
In the general case, note that, as in the proof of Theorem \ref{MOISpBound}, one can find a trigonometric polynomial $P$ such that
$$\left\|f^{(n)}-P^{(n)}\right\|_{\infty}\le C\epsilon,$$
where $C>0$ is a constant that will be defined later. By Theorem \ref{MOISpBound}, there exists a constant $c_{p,n}$ such that
\begin{equation}\label{ref23}
\left\|\left[\Gamma^{V_1,\ldots,V_{n+1}}(f^{[n]}-P^{[n]})\right](X_1,\ldots,X_n)\right\|_p\le c_{p,n}C\epsilon \|X_1\|_{np}\cdots\|X_n\|_{np},
\end{equation}
and
\begin{equation}\label{ref24}
\left\|\left[\Gamma^{U_1,\ldots,U_{n+1}}(P^{[n]}-f^{[n]})\right](X_1,\ldots,X_n)\right\|_p\le c_{p,n} C \epsilon\|X_1\|_{np}\cdots\|X_n\|_{np}.
\end{equation}	
Next, the first part of the proof yields the existence of a constant $\delta>0$ such that, if $\|V_i-U_i\|\le\delta$ for every $1\le i\le n+1$, then
\begin{equation}\label{ref25}
\left\|\left[\Gamma^{V_1,\ldots,V_{n+1}}(P^{[n]})-\Gamma^{U_1,\ldots,U_{n+1}}(P^{[n]}) \right](X_1,\ldots,X_n) \right\|_p \leq \frac{\epsilon}{3} \|X_1\|_{np} \cdots\|X_n\|_{np}.
\end{equation}	
Finally, the equality
\begin{align*}
&\Gamma^{V_1,\ldots,V_{n+1}}(f^{[n]})-\Gamma^{U_1,\ldots,U_{n+1}}(f^{[n]})\\
&=\Gamma^{V_1,\ldots,V_{n+1}}(f^{[n]}-P^{[n]})+\left(\Gamma^{V_1,\ldots,V_{n+1}}(P^{[n]})-\Gamma^{U_1,\ldots,U_{n+1}}(P^{[n]})\right)+\Gamma^{U_1,\ldots,U_{n+1}}(P^{[n]}-f^{[n]}),
\end{align*}
together with \eqref{ref23}, \eqref{ref24} and \eqref{ref25} yield
\begin{align*}
\left\|\left[\Gamma^{V_1,\ldots,V_{n+1}}(f^{[n]})-\Gamma^{U_1,\ldots,U_{n+1}}(f^{[n]})\right](X_1,\ldots,X_n)\right\|_p 
&\le\{2c_{p,n}\,C\epsilon + \frac{\epsilon}{3}\} \|X_1\|_{np} \cdots \|X_n\|_{np},
\end{align*}
and setting
$C:=\frac{1}{3c_{p,n}}$ concludes the proof of the lemma.
\end{proof}
 
The following corollary is a simple consequence of the above Lemma \ref{LemmaFrediff}.

\begin{crl}\label{ContinuityCorollary}
Let $1<p<\infty$, $n\in\N$, $n\ge 2$ and $1\le j\le n$. Let $U:\mathbb{R}\rightarrow \mathcal{U}(\hilh)$ be such that the function $\tilde{U}:t\in\mathbb{R}\mapsto U(t)-U(0)\in\Sp^p(\hilh)$ belongs to $C(\R,\Sp^p(\mathcal{H}))$, and let $K_1,\ldots,K_{n-1}\in\Sp^{p}(\hilh)$. Let $f\in C^{n-1}(\cir)$. Define $\psi:s\in\R\mapsto\Sp^{p}(\hilh)$ by
\begin{align*}
&\psi(s)=\left[\Gamma^{(U(s))^{j},(U(t))^{n-j}}(f^{[n-1]})\right](K_1,\ldots,K_{n-1}).
\end{align*}
Then $\psi$ is continuous at $t\in\R$. In addition, if $f\in C^{n}(\cir)$ we have for any $s,t\in\R$,
\begin{align}
\label{ref5}&\psi(s)-\psi(t)=\sum_{i=1}^{j}\left[\Gamma^{(U(s))^{i},(U(t))^{n-i+1}}(f^{[n]})\right]\left(K_1,\ldots,K_{i-1},U(s)-U(t),K_{i},\ldots,K_{n-1}\right).
\end{align}
\end{crl}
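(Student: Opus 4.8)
The plan is to prove the displayed perturbation formula \eqref{ref5} first, since the continuity claim will follow from it together with Lemma \ref{LemmaFrediff}. Observe that $\psi(s)$ differs from $\psi(t)$ only by replacing the first $j$ copies of the unitary $U(t)$ by $U(s)$ in the multiple operator integral; thus the natural object to invoke is Lemma \ref{PerturbationFormula3}, which is exactly the statement that
$$\left[\Gamma^{(W)^{j},(W')^{n-j}}(f^{[n-1]})-\Gamma^{(W')^{n}}(f^{[n-1]})\right](K_1,\ldots,K_{n-1})=\sum_{i=1}^{j}\left[\Gamma^{(W)^{i},(W')^{n-i+1}}(f^{[n]})\right](K_1,\ldots,K_{i-1},W-W',K_i,\ldots,K_{n-1})$$
for unitaries $W,W'$ with $W-W'\in\Sp^{p}(\hilh)$. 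So first I would set $W=U(s)$ and $W'=U(t)$. The hypothesis $\tilde U\in C(\R,\Sp^p(\hilh))$ guarantees that $U(s)-U(t)=\tilde U(s)-\tilde U(t)\in\Sp^{p}(\hilh)$, which is precisely the trace-class-type condition needed to apply Lemma \ref{PerturbationFormula3}. Writing out that lemma for $W=U(s)$ and then again for the trivial comparison of $\psi(t)$ with $\Gamma^{(U(t))^{n}}(f^{[n-1]})$ (which is just $\psi(t)$ itself when $W'=U(t)$), the terms $\Gamma^{(U(t))^{n}}(f^{[n-1]})$ cancel and one lands directly on \eqref{ref5}. This is the substantive half of the corollary and essentially reduces to correctly identifying the telescoping structure already encoded in Lemma \ref{PerturbationFormula3}.

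For the continuity statement, which is asserted only under $f\in C^{n-1}(\cir)$ (weaker than the $C^{n}$ hypothesis needed for \eqref{ref5}), I would proceed directly rather than through \eqref{ref5}. The aim is to show $\|\psi(s)-\psi(t)\|_p\to 0$ as $s\to t$. Here $\psi(s)-\psi(t)=\left[\Gamma^{(U(s))^{j},(U(t))^{n-j}}(f^{[n-1]})-\Gamma^{(U(t))^{j},(U(t))^{n-j}}(f^{[n-1]})\right](K_1,\ldots,K_{n-1})$, a difference of two multiple operator integrals whose first $j$ unitary arguments are $U(s)$ versus $U(t)$ and whose remaining arguments coincide. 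The key observation is that $\tilde U\in C(\R,\Sp^p(\hilh))$ implies in particular that $U(s)\to U(t)$ in $\Sp^{p}(\hilh)$, hence in the operator norm $\|\cdot\|$, as $s\to t$. Lemma \ref{LemmaFrediff} is precisely tailored to this situation: applied with the divided difference $f^{[n-1]}$ of a $C^{n-1}$ function, with the $n$ unitaries $(U(s))^{j},(U(t))^{n-j}$ compared to $(U(t))^{n}$, it gives, for every $\epsilon>0$, a $\delta>0$ such that $\max_k\|V_k-U_k\|\le\delta$ forces the $\|\cdot\|_p$-norm of the difference to be at most $\epsilon$ times a product of the Schatten norms of the $K_i$. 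Since $K_1,\ldots,K_{n-1}$ are fixed, this product is a fixed constant, so continuity of $\psi$ at $t$ follows by choosing $s$ close enough that $\|U(s)-U(t)\|\le\delta$.

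A small technical point to address is the indexing shift: Lemma \ref{LemmaFrediff} is stated for $n+1$ unitaries and $n$ arguments $X_i\in\Sp^{np}(\hilh)$ with the divided difference $f^{[n]}$, whereas here I need the version with $n$ unitaries, $n-1$ arguments $K_i$, and divided difference $f^{[n-1]}$; I would simply invoke Lemma \ref{LemmaFrediff} with $n$ replaced by $n-1$ (legitimate since $n\ge 2$), noting that the $K_i\in\Sp^{p}(\hilh)\subset\Sp^{(n-1)p}(\hilh)$ so the hypotheses are met, and that only the exponents in the Schatten norms on the right-hand side change. The main obstacle, such as it is, is bookkeeping rather than analysis: one must keep the two hypotheses straight, using the weaker $f\in C^{n-1}(\cir)$ and Lemma \ref{LemmaFrediff} for continuity, and the stronger $f\in C^{n}(\cir)$ together with Lemma \ref{PerturbationFormula3} for the explicit formula \eqref{ref5}, and one must verify that the $\Sp^{p}$-continuity of $\tilde U$ delivers convergence of the unitaries in operator norm, which is what Lemma \ref{LemmaFrediff} actually consumes.
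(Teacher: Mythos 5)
Your proposal is correct and matches the paper's intended argument: the paper presents this corollary as ``a simple consequence'' of Lemma \ref{LemmaFrediff}, and formula \eqref{ref5} is exactly Lemma \ref{PerturbationFormula3} applied with $U=U(s)$, $V=U(t)$, $k=j$ (noting $\psi(t)=\bigl[\Gamma^{(U(t))^{n}}(f^{[n-1]})\bigr](K_1,\ldots,K_{n-1})$), which is precisely how you proceed. Your handling of the bookkeeping — applying Lemma \ref{LemmaFrediff} with $n$ replaced by $n-1$, using $K_i\in\Sp^{p}(\hilh)\subset\Sp^{(n-1)p}(\hilh)$, and converting $\Sp^{p}$-continuity of $\tilde{U}$ into operator-norm closeness of the unitaries — is exactly what the paper's abbreviated justification requires.
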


To proceed further we need the following lemma, which will help us to prove the differentiability result in Theorem \ref{MainTheorem1}.

\begin{lma}\label{DifferentiationLemma}
Let $1<p<\infty$ and $U:\R\rightarrow\mathcal{U}(\hilh)$ be such that the function $\tilde{U}:t\in\R\mapsto U(t)-U(0)\in\Sp^p(\hilh)$ is differentiable. Let $m\in \N$, let $V_1,\ldots,V_m:\R\rightarrow \mathcal{S}^p(\hilh)$ be differentiable functions and let $f\in C^{m+1}(\cir)$. We consider the function
$$\psi: t\in\R\mapsto\Sp^p(\hilh)$$
defined by
$$\psi(t)=\left[\Gamma^{(U(t))^{m+1}}(f^{[m]})\right](V_1(t), \ldots, V_m(t)).$$
Then $\psi$ belongs to $D^1(\R,\Sp^p(\hilh))$ and its derivative is given, for any $t\in\R$ by
\begin{align*}
\psi'(t) = 
& \sum_{k=1}^m \left[\Gamma^{(U(t))^{m+1}}(f^{[m]})\right](V_1(t),\ldots, V_{k-1}(t), V_k'(t), V_{k+1}(t), \ldots, V_m(t)) \\
& + \sum_{k=1}^{m+1} \left[\Gamma^{(U(t))^{m+2}}(f^{[m+1]})\right](V_1(t),\ldots, V_{k-1}(t), \tilde{U}'(t), V_{k}(t), \ldots, V_m(t)).
\end{align*}
If we further assume that $\tilde{U}, V_1, \ldots, V_m$ are continuously differentiable, then so is $\psi$.
\end{lma}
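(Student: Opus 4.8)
The plan is to prove differentiability of $\psi$ by establishing it as a sum of two contributions: one coming from differentiating the arguments $V_k(t)$, and one coming from differentiating the symbol-bearing unitaries $U(t)$ inside $\Gamma^{(U(t))^{m+1}}(f^{[m]})$. The cleanest route is to write the difference quotient $\frac{1}{h}(\psi(t+h)-\psi(t))$ and insert a telescoping decomposition. First I would fix $t\in\R$ and set $h\ne 0$, and use multilinearity of $\Gamma^{(U(t+h))^{m+1}}(f^{[m]})$ to telescope the change in arguments:
\begin{align*}
&\left[\Gamma^{(U(t+h))^{m+1}}(f^{[m]})\right](V_1(t+h),\ldots,V_m(t+h))-\left[\Gamma^{(U(t+h))^{m+1}}(f^{[m]})\right](V_1(t),\ldots,V_m(t))\\
&=\sum_{k=1}^m \left[\Gamma^{(U(t+h))^{m+1}}(f^{[m]})\right](V_1(t+h),\ldots,V_{k-1}(t+h),V_k(t+h)-V_k(t),V_{k+1}(t),\ldots,V_m(t)).
\end{align*}
Dividing by $h$ and letting $h\to 0$, each term converges to the corresponding summand in the first line of the claimed formula: the difference quotient $\frac{1}{h}(V_k(t+h)-V_k(t))\to V_k'(t)$ in $\Sp^p$, the other arguments converge in $\Sp^p$ (using continuity of each $V_j$, which follows from differentiability), and the multilinear operators $\Gamma^{(U(t+h))^{m+1}}(f^{[m]})$ are uniformly bounded on the relevant $\Sp^p$-spaces by Theorem \ref{MOISpBound}. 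Convergence of the symbols follows from Corollary \ref{ContinuityCorollary} (continuity of $s\mapsto\Gamma^{(U(s))^{m+1}}(f^{[m]})$ applied to fixed arguments), combined with the uniform boundedness to pass to moving arguments.

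The second contribution is the term where only the unitaries change:
$$\frac{1}{h}\left(\left[\Gamma^{(U(t+h))^{m+1}}(f^{[m]})\right](V_1(t),\ldots,V_m(t))-\left[\Gamma^{(U(t))^{m+1}}(f^{[m]})\right](V_1(t),\ldots,V_m(t))\right).$$
Here I would invoke Lemma \ref{PerturbationFormula3} (the higher order perturbation formula) with $U=U(t+h)$, $V=U(t)$, $k$ ranging over $1,\ldots,m+1$, to rewrite the difference $\Gamma^{(U(t+h))^{m+1}}(f^{[m]})-\Gamma^{(U(t))^{m+1}}(f^{[m]})$ as a sum over $i=1,\ldots,m+1$ of terms
$$\left[\Gamma^{(U(t+h))^{i},(U(t))^{m+2-i}}(f^{[m+1]})\right](V_1(t),\ldots,V_{i-1}(t),U(t+h)-U(t),V_i(t),\ldots,V_m(t)).$$
Dividing by $h$, the inserted argument becomes $\frac{1}{h}(U(t+h)-U(t))=\frac{1}{h}(\tilde{U}(t+h)-\tilde{U}(t))\to\tilde{U}'(t)$ in $\Sp^p$, and the symbols $\Gamma^{(U(t+h))^{i},(U(t))^{m+2-i}}(f^{[m+1]})$ converge (via Corollary \ref{ContinuityCorollary} and Theorem \ref{MOISpBound}) to $\Gamma^{(U(t))^{m+2}}(f^{[m+1]})$, yielding exactly the second line of the formula. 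Note that $f\in C^{m+1}(\cir)$ is precisely what makes $f^{[m+1]}$ available and $\Gamma^{(U(t))^{m+2}}(f^{[m+1]})$ bounded on $\Sp^p$.

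The main obstacle is justifying the interchange of limits rigorously: in each term both the multilinear map (via its symbol) and several of its arguments are varying simultaneously with $h$, so I cannot simply take limits factor by factor without control. The key technical device throughout is the uniform $\Sp^p$-boundedness furnished by Theorem \ref{MOISpBound}, which lets me split each difference quotient into "symbol fixed, argument moving" and "symbol moving, argument fixed" pieces and bound the cross terms; continuity of the symbols in the SOT-driven sense is supplied by Corollary \ref{ContinuityCorollary}. Once the derivative formula is established, the final continuity assertion is routine: if $\tilde U, V_1,\ldots,V_m$ are $C^1$, then each summand in $\psi'(t)$ is a composition of the continuous symbol map $s\mapsto\Gamma^{(U(s))^{\bullet}}(f^{[\bullet]})$ (Corollary \ref{ContinuityCorollary}) with continuous $\Sp^p$-valued arguments, and the uniform boundedness of Theorem \ref{MOISpBound} again guarantees joint continuity, so $\psi'$ is continuous and $\psi\in C^1(\R,\Sp^p(\hilh))$.
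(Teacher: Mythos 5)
Your proposal is correct and follows essentially the same route as the paper: the same split of the difference quotient into an ``arguments move'' part (handled by telescoping and multilinearity) and a ``unitaries move'' part (handled by Lemma \ref{PerturbationFormula3}), with limits justified by the uniform bound of Theorem \ref{MOISpBound} and the symbol continuity of Corollary \ref{ContinuityCorollary}. The only difference is cosmetic --- the paper freezes the symbol at $t$ in the first piece (so that step needs only boundedness of a single fixed multilinear map), whereas you freeze the arguments at $t$ in the second piece and shift the ``symbol and arguments both moving'' burden to the first piece, which you handle correctly by the same uniform-boundedness device.
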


\begin{proof}
For any $s,t \in \mathbb{R}$, write
$$\psi(t)-\psi(s)=\psi_1(s,t)+\psi_2(s,t),$$
where
$$\psi_1(s,t)=\left[\Gamma^{(U(t))^{m+1}}(f^{[m]})\right](V_1(t),\ldots,V_m(t))-\left[\Gamma^{(U(t))^{m+1}}(f^{[m]})\right](V_1(s),\ldots,V_m(s))$$
and
$$\psi_2(s,t)=\left[\Gamma^{(U(t))^{m+1}}(f^{[m]})\right](V_1(s),\ldots,V_m(s))-\left[\Gamma^{(U(s))^{m+1}}(f^{[m]})\right](V_1(s),\ldots,V_m(s)).$$
We have
\begin{align*}
\psi_1(s,t)&=\sum_{k=1}^m \left(\left[\Gamma^{(U(t))^{m+1}}(f^{[m]})\right](V_1(s),\ldots,V_{k-1}(s),V_k(t),\ldots,V_m(t))\right.\\
& \ \ \ \ \ \ \ \ \ -\left.\left[\Gamma^{(U(t))^{m+1}}(f^{[m]})\right](V_1(s),\ldots,V_k(s),V_{k+1}(t),\ldots,V_m(t))\right)\\
&=\sum_{k=1}^m\left[\Gamma^{(U(t))^{m+1}}(f^{[m]})\right](V_1(s),\ldots,V_{k-1}(s),V_k(t)-V_k(s),V_{k+1}(t),\ldots,V_m(t)).
\end{align*}
By differentiability (and hence continuity) of $V_k,~k=1,\ldots,m$, we have
$$\underset{s\to t}{\lim} \ V_k(s)=V_k(t) \ \ \text{and} \ \ \underset{s\to t}{\lim} \ \dfrac{V_k(t)-V_k(s)}{t-s}=V_k'(t)$$
in $\Sp^p(\hilh)$. Since $\Gamma^{(U(t))^{m+1}}(f^{[m]})\in\mathcal{B}_{m}(\Sp^p(\hilh))$, this implies that
\begin{align*}
\dfrac{\psi_1(s,t)}{t-s}&=\sum_{k=1}^m\left[\Gamma^{(U(t))^{m+1}}(f^{[m]})\right]\left(V_1(s),\ldots, V_{k-1}(s),\dfrac{V_k(t)-V_k(s)}{t-s},V_{k+1}(t),\ldots,V_m(t)\right)\\
&\underset{s\to t}{\longrightarrow}\sum_{k=1}^m\left[\Gamma^{(U(t))^{m+1}}(f^{[m]})\right](V_1(t),\ldots,V_{k-1}(t),V_k'(t),V_{k+1}(t),\ldots,V_m(t))
\end{align*}
in $\Sp^{p}(\hilh)$.
For $\psi_2$, by Lemma \ref{PerturbationFormula3} we have
\begin{align}
\nonumber&\dfrac{\psi_2(s,t)}{t-s}\\
\label{ref9}&=\sum_{k=1}^{m+1}\left[\Gamma^{(U(t))^{k},(U(s))^{m+2-k}}(f^{[m+1]})\right]\left(V_1(s),\ldots,V_{k-1}(s),\frac{\tilde{U}(t)-\tilde{U}(s)}{t-s},V_{k}(t),\ldots,V_m(t)\right).
\end{align}	
For $1\le k\le m+1$ and any $s\neq t$, let
\begin{align}
\label{ref15}\Gamma_{k}(s)=\left[\Gamma^{(U(t))^{k},(U(s))^{m+2-k}}(f^{[m+1]})\right]\left(V_1(s),\ldots,V_{k-1}(s),\frac{\tilde{U}(t)-\tilde{U}(s)}{t-s},V_{k}(t),\ldots,V_m(t)\right).
\end{align}
Since $(\tilde{U}(t)-\tilde{U}(s))/(t-s)$ goes to $\tilde{U}'(t)$ in $\Sp^{p}(\hilh)$ as $s$ goes to $t$, by the uniform boundedness of $\left[\Gamma^{(U(t))^{k},(U(s))^{m+2-k}}(f^{[m+1]})\right]\in\mathcal{B}_{m+1}(\Sp^{p}(\hilh))$, we deduce that 
$$\lim_{s\to t}\Gamma_{k}(s)$$
exists if and only if
\begin{align*}
\lim_{s\to t}\left[\Gamma^{(U(t))^{k},(U(s))^{m+2-k}}(f^{[m+1]})\right](V_1(t),\ldots,V_{k-1}(t),\tilde{U}'(t),V_{k}(t),\ldots,V_m(t)),
\end{align*}
exists, and in that case, both limits are equal. By Corollary \ref{ContinuityCorollary}, the latter limit exists and it further reduces to
\begin{align*}
\lim_{s\to t}\Gamma_{k}(s)=\left[\Gamma^{(U(t))^{m+2}}(f^{[m+1]})\right](V_1(t),\ldots,V_{k-1}(t),\tilde{U}'(t),V_{k}(t),\ldots,V_m(t))
\end{align*}
in $\Sp^{p}(\hilh)$. This proves the first claim.
	
In the case when $\tilde{U},V_1,\ldots,V_m$ are continuously differentiable, a similar line of argument as in the proof of differentiability of $\psi$ and an application of Corollary \ref{ContinuityCorollary} to $\psi'$ shows that $\psi'$ is continuous. The details are left to the reader. This completes the proof.
\end{proof}

\subsection{Proofs of the main results}\label{Subsec5}\hfill\\

We now turn to the proofs of the main results of this article, which were outlined in Subsection \ref{Subsec3}.

\begin{proof}[Proof of Theorem $\ref{Frechetdiff}$] In the proof, we fix $U\in\mathcal{U}(\hilh)$, $V$ will be an element of a $\Sp^p$-neighborhood of $U$ and $X_1,\ldots,X_{n-1}$ will denote elements of $\Sp^{p}(\hilh)$.
	
We prove the result by induction on $n$. The argument for the base case $n=1$ is the same as for a general integer $n$, hence we assume that this case has been proven.
	
Assume now that the result holds true for functions in $C^{n-1}(\cir)$, $n\geq 2$ and let $f\in C^n(\cir)$. In particular, $f\in C^{n-1}(\cir)$ so $f$ is $n-1$ times continuously Fr\'echet $\Sp^{p}$-differentiable at $U$ and Formula \eqref{formdiff1} holds true for every $k=1,\ldots,n-1$. We now show that $f$ is $n$ times continuously Fr\'echet $\Sp^{p}$-differentiable at $U$ and that \eqref{formdiff1} holds true for $k=n$. First, let us denote
$$T_1=(D_{F,p}^{n-1}f(V)-D_{F,p}^{n-1}f(U))(X_1,\ldots,X_{n-1}).$$
Note that, by assumption and by Lemma \ref{PerturbationFormula3},
\begin{align*}
T_1&=\sum_{\sigma\in\text{Sym}_{n-1}}\left[\Gamma^{(V)^{n}}(f^{[n-1]})-\Gamma^{(U)^{n}}(f^{[n-1]})\right](X_{\sigma(1)},\ldots,X_{\sigma(n-1)})\\
&=\sum_{\sigma\in\text{Sym}_{n-1}}\sum_{i=1}^{n}\left[\Gamma^{(V)^{i},(U)^{n-i+1}}(f^{[n]})\right](X_{\sigma(1)},\ldots,X_{\sigma(i-1)},V-U,X_{\sigma(i)},\ldots,X_{\sigma(n-1)}).
\end{align*}
Next, we let $X_n:=V-U\in\Sp^{p}(\hilh)$. Notice that
\begin{align*}
T_2&:=\sum_{\sigma\in\text{Sym}_n}\left[\Gamma^{(U)^{n+1}}(f^{[n]})\right](X_{\sigma(1)},\ldots,X_{\sigma(n)})\\
&=\sum_{i=1}^n\sum_{\sigma\in\text{Sym}_{n-1}}\left[\Gamma^{(U)^{n+1}}(f^{[n]})\right](X_{\sigma(1)},\ldots,X_{\sigma(i-1)},V-U,X_{\sigma(i)},\ldots,X_{\sigma(n-1)}),
\end{align*}
so that
\begin{align*}
&\|T_1-T_2\|_p\\ 
&\le\sum_{i=1}^n\sum_{\sigma\in\text{Sym}_{n-1}}\left\|\left[(\Gamma^{(V)^{i},(U)^{n-i+1}}-\Gamma^{(U)^{n+1}})(f^{[n]})\right](X_{\sigma(1)},\ldots,X_{\sigma(i-1)},V-U,X_{\sigma(i)},\ldots,X_{\sigma(n-1)})\right\|_p.
\end{align*}
Let $\epsilon>0$, $1\le i\le n$ and $\sigma\in\text{Sym}_{n-1}$. By Lemma \ref{LemmaFrediff}, there exists $\delta>0$ such that, if $$(\|V-U\| \le)\ \|V-U\|_{p}\le\delta,$$
then
\begin{align*}
&\left\|\left[(\Gamma^{(V)^{i},(U)^{n-i+1}}-\Gamma^{(U)^{n+1}})(f^{[n]})\right](X_{\sigma(1)},\ldots,X_{\sigma(i-1)},V-U,X_{\sigma(i)},\ldots,X_{\sigma(n-1)})\right\|_p\\
&\hspace*{0.5cm}\le\epsilon\|V-U\|_{np} \|X_1\|_{np}\cdots\|X_{n-1}\|_{np} \\
&\hspace*{0.5cm}\le\epsilon\|V-U\|_{p} \|X_1\|_{p} \cdots \|X_{n-1}\|_{p}.
\end{align*}
By linearity, this concludes the proof of the Fr\'echet $\Sp^{p}$-differentiability. The fact that $f$ is continuously Fr\'echet $\Sp^{p}$-differentiable now follows easily from \eqref{formdiff1}, linearity, and Lemma \ref{LemmaFrediff}.
\end{proof}

\begin{proof}[Proof of Theorem $\ref{MainTheorem1}$]

The assumption on $f$ ensures that $f$ is $n$ times continuously Fr\'echet $\Sp^{p}$-differentiable at every $U(t)\in\mathcal{U}(\hilh)$.\vspace*{0.2cm}

(i) We prove the first claim by induction on $k$, $1\le k\le n$. Let $s,t\in\R$. The $n$th order Fr\'echet $\Sp^{p}$-differentiability of $f$ at $U(t)$ implies that it is $n-1$ times Fr\'echet $\Sp^{p}$-differentiable in a $\Sp^p$-neighborhood of $U(t)$ and there is a $n$-linear bounded operator
$$D^n_{F,p}f(U(t))\in\mathcal{B}_n(\Sp^{p}(\hilh))$$
such that for every $X_1,\ldots,X_{n-1}\in\Sp^{p}(\hilh)$,
\begin{align*}
&\|(D_{F,p}^{n-1}f(U(s))-D_{F,p}^{n-1}f(U(t)))(X_1,\ldots,X_{n-1})-D_{F,p}^nf(U(t))(X_1,\ldots,X_{n-1},U(s)-U(t))\|_p\\
&=o(\|U(s)-U(t)\|_p)\|X_1\|_p\cdots\|X_{n-1}\|_p,
\end{align*}
as $\|U(s)-U(t)\|_p\to 0$. For $n=1$ there exists a  bounded linear operator $$D^{1}_{F,p}f(U(t))\in\mathcal{B}(\Sp^{p}(\hilh))$$
such that
\begin{align*}
\|f(U(s))-f(U(t))-D^{1}_{F,p}f(U(t))(U(s)-U(t))\|_{p}=o(\|U(s)-U(t)\|_{p})
\end{align*}
as $\|U(s)-U(t)\|_{p}\to0$.\\
Again note that 
\begin{align}
\nonumber&\left\|f(U(s))-f(U(t))-(s-t)D^{1}_{F,p}f(U(t))(\tilde{U}'(t))\right\|_{p}\\
\nonumber&\le\left\|f(U(s))-f(U(t))-D^{1}_{F,p}f(U(t))(U(s)-U(t))\right\|_{p}\\
\label{ref26}&\quad+\left\|(s-t)\left[D^{1}_{F,p}f(U(t))\left(\frac{U(s)-U(t)}{s-t}\right)-D^{1}_{F,p}f(U(t))(\tilde{U}'(t))\right]\right\|_{p}.
\end{align}
Now, the fact that $\tilde{U}\in D^{n}(\R,\Sp^{p}(\hilh))$ together with the inequality \eqref{ref26} imply that $f$ is G\^ateaux $\Sp^{p}$-differentiable at $U(t)$ along with its G\^ateaux derivative is $$\varphi'(t)=\frac{d}{ds}\bigg|_{s=t}f(U(s))=D_{F,p}^{1}f(U(t))(\tilde{U}'(t)),$$ 
where
\begin{align*}
D_{F,p}^{1}f(U(t))(\tilde{U}'(t))=\left[\Gamma^{(U(t))^{2}}(f^{[1]})\right](\tilde{U}'(t)).
\end{align*}
This proves the base of the induction.\\
Now assume that for $1\le k\le n-1$, $f$ is $k$ times G\^ateaux $\Sp^{p}$-differentiable, that is $\varphi\in D^{k}(\R,\Sp^{p}(\hilh))$, and
\begin{align}
\nonumber\varphi^{(k)}(t)&=\sum_{m=1}^{k} \sum_{\substack{l_1,\ldots,l_m\ge 1 \\ l_1+\cdots+l_m=k}}\dfrac{k!}{l_1!\cdots l_m!}\left[\Gamma^{(U(t))^{m+1}}(f^{[m]})\right]\left(\tilde{U}^{(l_1)}(t),\ldots,\tilde{U}^{(l_m)}(t)\right)\\
\nonumber&=\sum_{m=1}^{k-1} \sum_{\substack{l_1,\ldots,l_m\ge 1 \\ l_1+\cdots+l_m=k}}\dfrac{k!}{l_1!\cdots l_m!}\left[\Gamma^{(U(t))^{m+1}}(f^{[m]})\right]\left(\tilde{U}^{(l_1)}(t),\ldots,\tilde{U}^{(l_m)}(t)\right)\\
\nonumber&\quad+D_{F,p}^{k}f(U(t))\left((\tilde{U}'(t))^{k}\right)\\
\label{ref28}&:=[A]+[B].
\end{align}
To check the $(k+1)$th order G\^ateaux differentiability of $f$ it is enough to check the G\^ateaux differentiability of $[A]$ and $[B]$. Note that, the $n$th order Fr\'echet $\Sp^{p}$-differentiability of $f$ and $\tilde{U}\in D^{n}(\R,\Sp^{p}(\hilh))$ ensures that $[B]$ is G\^ateaux $\Sp^{p}$-differentiable, and
\begin{align}
\nonumber\frac{d}{ds}\bigg\vert_{s=t}D_{F,p}^{k}f(U(s))\left((\tilde{U}'(s))^{k}\right)&=\sum_{i=1}^{k}D_{F,p}^{k}f(U(t))\bigg(\tilde{U}'(t),\ldots,\tilde{U}'(t),\underbrace{\tilde{U}''(t)}_{i},\tilde{U}'(t),\ldots,\tilde{U}'(t)\bigg)\\
\label{ref29}&\quad+D_{F,p}^{k+1}f(U(t))\left((\tilde{U}'(t))^{k+1}\right).
\end{align}
On the other hand, the G\^ateaux $\Sp^{p}$-differentiability of $[A]$ is justified by Lemma \ref{DifferentiationLemma}. Moreover, applying Lemma \ref{DifferentiationLemma} in $[A]$, together with \eqref{ref29} give $\varphi^{(k)}\in D^1(\R,\Sp^p(\hilh))$, that is, $f$ is $(k+1)$ times G\^ateaux $\Sp^{p}$-differentiable, and
\begin{align*}
&\varphi^{(k+1)}(t)=\\
&k!\sum_{m=1}^k\sum_{\substack{l_1,\ldots,l_m\ge 1 \\ l_1+\cdots+l_m=k}}\dfrac{1}{l_1!\cdots l_m!}\sum_{p=1}^m\left[\Gamma^{(U(t))^{m+1}}(f^{[m]})\right]\left(\tilde{U}_t^{(l_1)},\ldots,\tilde{U}_t^{(l_{p-1})},\tilde{U}_t^{(l_p+1)},\tilde{U}_t^{(l_{p+1})},\ldots,\tilde{U}_t^{(l_m)}\right)\\
&+k!\sum_{m=1}^k\sum_{\substack{l_1,\ldots,l_m\ge 1 \\ l_1+\cdots+l_m=k}}\dfrac{1}{l_1!\cdots l_m!} \sum_{p=1}^{m+1}\left[\Gamma^{(U(t))^{m+2}}(f^{[m+1]})\right]\left(\tilde{U}_t^{(l_1)},\ldots,\tilde{U}_t^{(l_{p-1})},\tilde{U}_t',\tilde{U}_t^{(l_p)},\ldots,\tilde{U}_t^{(l_m)}\right) \\
&=k!\,(\varphi_1(t)+\varphi_2(t)),
\end{align*}
where $\tilde{U}_t^{(l)}$ stands for $\tilde{U}^{(l)}(t)$. Now by employing a similar combinatorial reasoning as demonstrated in the proof of \cite[Theorem 5.3.4]{SkToBook}, we conclude the induction on $k$.
 
This proves the Formula \eqref{DerivativeFormula1} and $\varphi\in D^{n}(\R,\Sp^{p}(\hilh))$.
Furthermore, if we assume that $\tilde{U}\in C^{n}(\R,\Sp^{p}(\hilh))$, then an application of Corollary \ref{ContinuityCorollary} shows that $\varphi^{(n)}\in C(\R,\Sp^{p}(\hilh))$.\vspace*{0.2cm} 
 
(ii) We obtain the representation \eqref{TaylorFormula1}  for the Taylor reminder $R_{k,f,U}(t)$ by induction on $k$. The base case $k=1$ follows from \eqref{PerturbationFormula1}. Notice that the assumption on $f$ ensures, by Theorem \ref{MainTheorem1}(i), that $\varphi\in D^{n}(\R,\Sp^{p}(\hilh))$. Further the inductive step is proved along the lines of the one in \cite[Theorem 5.4.1(ii)]{SkToBook}. Details are left to the reader. This completes the proof of the theorem.
\end{proof}

Finally, we now turn to the proof of Corollary \ref{MainCorollary1}.
\begin{proof}[Proof of Corollary $\ref{MainCorollary1}$]
Let $U(t)=e^{itA}U$, $t\in\R$, where $A\in\Sp^{p}_{sa}(\hilh)$ and define $$\tilde{U}(t):=e^{itA}U-U,\,t\in\R.$$
{\normalfont(i)} A routine calculation provides that for every $n\in\N$ we have $\tilde{U}\in C^{n}(\R,\Sp^{p}(\hilh))$, and
\begin{align}
\label{ref18}\tilde{U}^{(n)}(t)=(iA)^{n}U(t),\ \ t\in\R.
\end{align}
Now using Theorem \ref{MainTheorem1}(i) we prove \eqref{DerivativeFormula2} along with $\varphi\in C^{n}(\R,\Sp^{p}(\hilh))$. 

By Theorem \ref{MOISpBound}, there exists $c_{p,m}>0$ such that
\begin{align}
\label{ref22}\left\|\left[\Gamma^{(U(t))^{m+1}}(f^{[m]})\right]\left(A^{l_{1}}U(t),\ldots,A^{l_{m}}U(t)\right)\right\|_{p}\le c_{p,m}\left\|f^{(m)}\right\|_{\infty}\|A\|_{p}^{n}
\end{align}
for every $1\le m\le n$. Hence in view of \eqref{DerivativeFormula2} and \eqref{ref22} we have the existence of a constant $\tilde{c}_{p,n}>0$ such that for $1<p<\infty$,
\begin{align}
\left\|\varphi^{(n)}(t)\right\|_{p}\le \tilde{c}_{p,n}\sum_{m=1}^{n}\left\|f^{(m)}\right\|_{\infty}\|A\|_{p}^{n}.
\end{align}
{\normalcolor(ii)} Note that $R_{n,f}(A,U)=R_{n,f.U}(1)$ (see for instance \eqref{ref20} and \eqref{ref21}). Therefore \eqref{TaylorFormula1} along with \eqref{ref18} yield \eqref{TaylorFormula2}. 

Now assume that $1<n<p<\infty$ and $f\in C^{n}(\cir)$. By \cite[(4.2)]{PoSkSu16}, we have an integral representation for the remainder of the Taylor approximation
\begin{align*}
\tilde{U}(1)-\sum_{k=1}^{l_{1}-1} \dfrac{1}{k!}\tilde{U}^{(k)}(0)=\dfrac{1}{(l_{1}-1)!}\int_{0}^{1}(1-t)^{l_{1}-1}\tilde{U}^{(l_{1})}(t)dt,
\end{align*}
which along with \eqref{ref18} further implies the following estimate
\begin{align}
\label{ref19}\left\|\sum_{k=l_1}^{\infty}\dfrac{(iA)^k}{k!}U\right\|_{\frac{p}{l_{1}}}=\left\|\tilde{U}(1)-\sum_{k=1}^{l_{1}-1} \dfrac{1}{k!}\tilde{U}^{(k)}(0)\right\|_{\frac{p}{l_{1}}}\le\dfrac{1}{l_{1}!}\|A\|_{p}^{l_{1}}.
\end{align}
Therefore using \eqref{ref19} and Theorem \ref{MOISpBound} in \eqref{TaylorFormula2}, we have the existence of a constant $\tilde{c}_{p,n}>0$ such that
\begin{align*}
\|R_{n,f}(A,U)\|_{\frac{p}{n}}\le\tilde{c}_{p,n}\sum_{m=1}^{n}\left\|f^{(m)}\right\|_{\infty}\|A\|_{p}^{n}.
\end{align*}
This completes the proof of the corollary.
\end{proof}

\noindent\textit{Acknowledgment}: A. Chattopadhyay is supported by the Core Research Grant (CRG), File No: CRG/2023/004826, of SERB. S. Giri acknowledges the support by the Prime Minister's Research Fellowship (PMRF), Government of India. C. Pradhan acknowledges support from the IoE post-doctoral fellowship at IISc Bangalore. The authors would like to heartily thank the anonymous referee for a careful
reading of the manuscript and helpful suggestions.
\vspace{.1in}

\end{document}